\begin{document}

\title{Shooting methods for computing geodesics on the Stiefel manifold}

\author{Marco Sutti\thanks{Mathematics Division, National Center for Theoretical Sciences, Taipei, Taiwan (\email{msutti@ncts.tw}).}\hspace{2mm}\orcidlink{0000-0002-8410-1372}}

\date{\today}

\maketitle

\begin{abstract}
This paper shows how to use the shooting method, a classical numerical algorithm for solving boundary value problems, to compute the Riemannian distance on the Stiefel manifold $\Stnp$, the set of $ n \times p $ matrices with orthonormal columns. The main feature is that we provide neat, explicit expressions for the Jacobians. To the author's knowledge, this is the first time some explicit formulas are given for the Jacobians involved in the shooting methods to find the distance between two given points on the Stiefel manifold. This allows us to perform a preliminary analysis for the single shooting method. Numerical experiments demonstrate the algorithms in terms of accuracy and performance. Finally, we showcase three example applications in summary statistics, shape analysis, and model order reduction.

\bigskip
\textbf{Key words.} Shooting methods, Stiefel manifold, endpoint geodesic problem, Riemannian distance, Newton's method, Jacobians

\medskip
\textbf{AMS subject classifications.} 65L10, 65F45, 65F60, 65L05, 53C22, 58C15

\end{abstract}

\section{Introduction}

The object of study in this paper is the compact Stiefel manifold, i.e., the set of $ n \times p $ matrices with orthonormal columns
\[
   \Stnp = \left\lbrace X \in \R^{n \times p}: \ X\tr\! X = I_p \right\rbrace.
\]
There are applications in several areas of mathematics and engineering that deal with data that belong to $\Stnp$. Domains of applications include numerical optimization, imaging, and signal processing. Some applications, like finding the Riemannian center of mass, require evaluating the geodesic distance between two arbitrary points on $\Stnp$. Since no explicit formula is known for computing the distance on $\Stnp$, one has to resort to numerical methods. 

In this paper, we are concerned with computing the Riemannian distance between two given points on the Stiefel manifold. As we shall see, the distance between two points on a manifold is related to the concept of minimizing geodesic\footnote{Geodesics are generally defined as critical points of the length functional, and as such, they may or may not be minima. A minimizing geodesic is one that minimizes the length functional. We introduce the notion of geodesics in section~\ref{sec:geodesic_exp_log}.}.
The problem can be briefly formulated as follows. Given two points $X$, $Y$ on $\Stnp$ that are sufficiently close to each other, finding the distance between them is equivalent to finding the tangent vector $\xi^{\ast} \in \mathrm{T}_{X}\Stnp$ with the shortest possible length such that \protect{\cite{Lee:2018,boumal2023intromanifolds}}
\[
\Exp_{X}(\xi^{\ast}) = Y,
\]
where $\Exp_{X}$ denotes the Riemannian exponential mapping at $X$. 
The solution to this problem is equivalent to the Riemannian logarithm of $Y$ with base point $X$
\[
 \xi^{\ast} = \Log_{X}(Y).
\]
The sought distance between $X$ and $Y$ is then given by the norm of $ \xi^{\ast} $.

Figure~\ref{fig:single_shooting} provides an artistic illustration of the problem. The latter will be stated in more detail in section~\ref{sec:problem_statement}.

\begin{figure}[htbp]
\centering
\includegraphics[width=0.50\columnwidth]{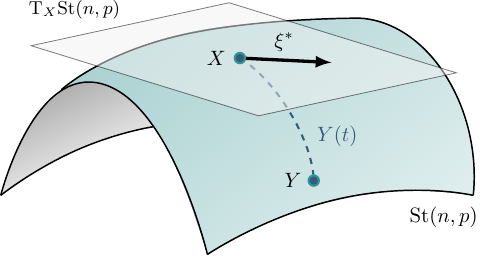}
\caption{Illustration of the problem statement.}
\label{fig:single_shooting}
\end{figure}

It is interesting to note that, for some manifolds, explicit formulas exist for computing the Riemannian distance. This is the case of the Grassmann manifold $ \Grass(n,p) $, which is the set of $ n \times p $ vector subspaces of $ \R^{n} $. For instance, let $ \cX $ and $ \cY $ be two subspaces belonging to $ \Grass(n,p) $, then the distance between $ \cX $ and $ \cY $ is
\[
   d(\cX,\cY) = \sqrt{\theta_{1}^{2} + \cdots + \theta^{2}_{p}},
\]
where $ \theta_{i} $, $ i = 1, \ldots, p $, are the principal angles between $ \cX $ and $ \cY $ (see \protect{\cite[Theorem~8]{Wong:1967}} and \protect{\cite[\S 3.8]{AMS:2004}}).
In contrast, there is no such closed-form solution for the Stiefel manifold. This motivates us to consider numerical methods. In general, the problem of finding the distance given two points on a Riemannian manifold is related to the Riemannian logarithm function (more details later in section~\ref{sec:geodesic_exp_log}). Several authors have already tackled the problem of computing the Riemannian logarithm on the Stiefel manifold. These contributions are detailed in section~\ref{sec:other_approaches}.

\subsection{Contributions}

In this work, we use the shooting methods, which are classical numerical algorithms for solving boundary value problems, to compute the distance on the Stiefel manifold $\Stnp$. These methods enjoy local quadratic convergence properties when close to the solution. 
These methods are not new (thorough coverage of the shooting methods is given, e.g., in \protect{\cite{Stoer:1991}}), neither is their application to computing the Riemannian distance on the Stiefel manifold (see, e.g., \protect{\cite{Bryner:2017}}). Still, to the author's knowledge, this is the first time some neat explicit expressions are given for the Jacobians involved in the shooting methods to compute the distance between two given points on the Stiefel manifold. Hence, there is no need for finite difference approximations. This allows us to conduct a preliminary analysis.
In particular, the main contributions of this paper are as follows.

\begin{enumerate}[label=(\roman*)]
\item We provide shooting methods for computing geodesics on the Stiefel manifold using the canonical metric, with neat formulas for the Jacobians involved in the algorithm.
\item We provide an elegant way to start the algorithm and get the desired quadratic convergence.
\item We analyze the explicit expression of the Jacobian of the matrix exponential involved in the single shooting method to find a connection between numerical linear algebra and differential geometry properties.
\item Numerical experiments demonstrate the algorithms in terms of performance and accuracy.
\item Applications in three different domains are provided to show how the algorithms may be used in practical circumstances.
\end{enumerate}

Part of this work has already appeared in the author's Ph.D. thesis~\protect{\cite{Sutti:2020b}}. Here, it is presented in an independent form; we extend it and add some new applications.

\subsection{Applications and motivation}

Many works have used the Stiefel manifold in their applications. To provide some motivation for the present work, this section summarizes a few applications that explicitly compute the geodesic distance according to the main areas of applications.

In affine invariant shape analysis, \protect{\cite{Younes:2008}} studied a specific metric on plane curves that has the property of being isometric to classical manifolds (like the sphere, complex projective plane, Stiefel and Grassmann manifolds) modulo change of parametrization. Moreover, they provided experimental results that explicitly compute minimizing geodesics between two closed curves.

In the context of shape analysis of closed curves, \protect{\cite{Srivastava:2016}} studied the space of functions representing unit-length, planar, closed curves, which can be shown to be a Stiefel manifold.
\protect{\cite[\S 4.2]{Ring:2012}} provided an application for image segmentation on the Stiefel manifold using a Riemannian variant of the classical BFGS algorithm. This is compared to the work of~\protect{\cite{Sundaramoorthi:2011}}, where the authors used geodesic retractions based on the matrix exponential.
The more general reference \protect{\cite[Chapter~6]{Kendall:1999}} also contains a discussion on the Stiefel manifold and shape spaces. 
\protect{\cite{Bryner:2017}} also proposed some numerical applications on the pre-shape space.
 
\protect{\cite{Cetingul:2009}} investigated the intrinsic mean shift algorithm for clustering on Stiefel and Grassmann manifolds. \protect{\cite{Turaga:2008,Turaga:2011}} investigated applications of the Stiefel manifold in computer vision and pattern recognition to develop accurate inference algorithms. Vision applications such as activity recognition, video-based face recognition, shape classification, and unsupervised clustering were targeted. In particular, step 3 of Algorithm 1 in~\protect{\cite{Turaga:2011}} computes the inverse exponential map, but it was unclear how this was achieved.

The low-rank representation (LRR) is a widely used technique in computer vision and pattern recognition for data clustering models. \protect{\cite{Yin:2015}} extended the LRR from Euclidean space to the manifold-valued data on the Stiefel manifold by incorporating the intrinsic geometry of the manifold. They acknowledged that, in general, it is pretty hard to compute the log mapping for the Stiefel manifold. Consequently,  they used the retraction map (a first-order approximation to the exponential mapping~\protect{\cite{Absil:2012}}) instead of the exponential map because of its reduced computational cost.

More recently, \protect{\cite{LiMa:2022}} proposed a generalization of the federated learning framework to Riemannian manifolds. In particular, they consider the kPCA problem on the Stiefel manifold. Even though they initially discuss the Riemannian logarithm mapping, they finally adopt a retraction in the numerical implementations, similarly to what was done by \protect{\cite{Yin:2015}}.

\subsection{Related works and other approaches} \label{sec:other_approaches}

Shooting methods are not the only option to solve the endpoint geodesic problem; other numerical algorithms have been proposed.

The leapfrog algorithm by Noakes~\protect{\cite{Noakes:1998}} is based on partitioning the original problem into smaller subproblems. This method has global convergence properties, but it slows down for an increasing number of subproblems or when the solution is approached \protect{\cite[\S 1]{Kaya:2008}}. Moreover, Noakes realized that his leapfrog algorithm was in some way imitating the Gauss--Seidel method \protect{\cite[\S 1]{Noakes:1998}}. This connection has been explored in~\protect{\cite{Sutti_V:2023}}.

\protect{\cite{Bryner:2017}} proposed two numerical schemes, shooting method and path-straightening, to compute endpoint geodesics on the Stiefel manifold by considering them as an embedded submanifold of the Euclidean space. From the matrix algebra perspective, Rentmeesters \protect{\cite{Rentmeesters:2013}}. Zimmermann \protect{\cite{Zimmermann:2017,Zimmermann:2019}} derived algorithms for evaluating the Riemannian logarithm map on the Stiefel manifold with respect to the canonical metric, which is locally convergent and depends upon the definition of the matrix logarithm function. \protect{\cite{Zimmermann:2022}} provided a unified method to deal with the endpoint geodesic problem on the Stiefel manifold with respect to a family of metrics.

Recently \protect{\cite{Noakes:2022}} proposed an alternative algorithm to find geodesics joining two given points. Like leapfrog, this method exploits the shooting method to compute geodesics joining junction points.

\subsection{Notation}\label{sec:notation}

We list here the notations and symbols adopted in the paper in order of appearance. Symbols only used in one section and notations specific to the applications of section~\ref{sec:applications} are typically omitted from this list. Some symbols are inevitably overloaded, but their meaning should be clear from the context. 

\begin{table}[htbp]
   \begin{center}
      \begin{tabular}{ll} 
          $ \Stnp $                          &  Stiefel manifold of orthonormal $n$-by-$p$ matrices \\
          $ X $, $ Y $, $ Y_{0} $, $ Y_{1} $ &  Elements of $ \Stnp $ \\
          $ I_{p} $                          &  The identity matrix of size $p$-by-$p$ \\
          $ \mathrm{T}_{X}\Stnp $            &  Tangent space at $ X $ to the Stiefel manifold $ \Stnp $ \\
          $ \xi^{\ast} $                     &  A tangent vector that we want to recover \\
          $ \Exp_{X} $                       &  Riemannian exponential map at $X$ \\
          $ \Log_{X} $                       &  Riemannian logarithm map at $X$ \\
          $ O_{p} $                          &  The null matrix of size $p$-by-$p$ \\
          $ \cS_{\mathrm{sym}}(p) $          &  Space of $ p $-by-$ p $ symmetric matrices \\
          $ \cS^{n-1} $                      &  The unit sphere embedded in $\R^{n}$ \\
          $ \On $                            &  The orthogonal group of $n$-by-$n$ orthogonal matrices \\
          $ X_{\perp} $                      &  An orthonormal matrix whose columns span \\
                                             &  the orthogonal complement of $ \mathrm{span}(X) $ \\
          $ \cS_{\mathrm{skew}}(p) $         &  Space of $ p $-by-$ p $ skew-symmetric matrices \\ 
          $ \Omega $                         &  An element of $ \cS_{\mathrm{skew}}(p) $ \\
          $ K $                              &  A matrix in $ \R^{(n-p) \times p} $ \\
          $ \bar{n} $                              &  The dimension of $ \Stnp $, equal to $ np - \tfrac{1}{2}p(p+1) $ \\
          $ \cM $                            &  Generic manifold \\
          $ \mathrm{T}_{x}\cM $              &  Tangent space at $ x $ to the Stiefel manifold $ \cM $ \\
          $ \langle \cdot, \cdot \rangle_{x} $   &  Inner product on the tangent space $ \mathrm{T}_{x}\cM $ \\
          $ g $                              &  Riemannian metric \\
          $ \gamma(t) $                      &  Parametrized curve on the manifold $\cM$ \\
          $ d $                              &  Riemannian distance function \\
          $ d(x,y) $                         &  Riemannian distance between two points $x$ and $y$ \\
          $ \P_{X} $                         &  The projector onto the tangent space $ \mathrm{T}_{X}\Stnp $ \\
          $ \inj_{X}(\cM) $                  &  Injectivity radius of $\cM$ at $X$ \\
          $ \inj(\cM) $                      &  Global injectivity radius of $\cM$ \\
          $ \| \cdot \|_{\F} $               &  Frobenius norm \\
          $ \| \cdot \|_{\mathrm{c}} $       &  Canonical norm \\
          $ A $   & The matrix $ \begin{bmatrix}   \Omega   &   -K\tr \\   K    &    O_{n-p}  \end{bmatrix} $ \\
          $ Z_{1}(t) $ or $ Y(t) $           &  A geodesic on $\Stnp$ \\
          $ Z_{2}(t) $ or $ \dt{Y}(t) $      &  The derivative of a geodesic \\
          $ F $                              &  The nonlinear function $ Z_{1}(1,\xi) - Y_{1} $ \\
          $ F^{(k)} $                        &  The nonlinear function $F$ evaluated at iteration $ k $ \\
          $ \delta \xi^{(k)} $               &  The residual at iteration $ k $ in the single shooting method \\
          $ J_{Z_{1}}^{x} $                  &  The Jacobian matrix of $Z_{1}$ with respect to $x$ \\
          $ J_{A(x)}^{x} $                   &  The Jacobian matrix of $A$ with respect to $x$ \\
          $ J_{\exp(A)}^{A} $                &  The Jacobian matrix of $\exp(A)$ with respect to its argument $A$ \\
          $\sigma_{\min}$, $\sigma_{\max}$   &  Smallest and largest singular values of a matrix \\
          $ \epsmach $                       &  The machine epsilon, in double precision
      \end{tabular}
   \end{center}
\end{table}

\subsection{Outline of the paper}

The remaining part of this paper is organized as follows.
Section~\ref{sec:stiefel_manifold} introduces the geometry of the Stiefel manifold. The reader who is familiar with Riemannian geometry, particularly the geometry of the Stiefel manifold, might want to skip this section.
Section~\ref{sec:problem_statement} presents the problem statement, which is the focus of this work. Section~\ref{sec:single_shooting} describes the single shooting method, and section~\ref{sec:multiple_shooting} is devoted to the multiple shooting method. Numerical experiments are presented in section~\ref{sec:numerical_experiments}, while section~\ref{sec:applications} focuses on more practical applications. Finally, we conclude the paper by summarizing the contributions and providing future research outlooks in section~\ref{sec:conclusions}.

\section{Geometry of the Stiefel manifold}\label{sec:stiefel_manifold}

The set of all $ n \times p $ orthonormal matrices, i.e.,
\[
   \Stnp = \lbrace X \in \Rnp \colon X\tr \! X = I_{p} \rbrace,
\]
endowed with its submanifold structure, as discussed below, is called an orthogonal or compact Stiefel manifold.
It is a subset of $ \Rnp $, and it can be proven that it has the structure of an embedded submanifold of $ \Rnp $ \protect{\cite[\S 3.3.2]{AMS:2008}}. 

We state some basic properties of the Stiefel manifold $ \Stnp $. It is closed because it is the inverse image of the closed set $ \lbrace O_{p} \rbrace $ under the continuous function $ F \colon \Rnp \to \mathcal{S}_{\mathrm{sym}}(p) $. It is bounded; each column of $ X \in \Stnp $ has norm $ 1 $, so the Frobenius norm of $ X $ equals $\sqrt{p}$. It is compact since it is closed and bounded. This follows from the Heine--Borel theorem \protect{\cite[\S A.2]{AMS:2008}}.

The Stiefel manifold $ \Stnp $ may degenerate to some special cases. For $ p = 1 $, it reduces to the unit sphere $ \cS^{n-1} $ in $ \R^{n} $.
For $ p = n $, the Stiefel manifold becomes the orthogonal group $ \On $, whose dimension is $ \tfrac{1}{2}n(n-1) $.

\subsection{Tangent spaces} \label{sec:tgspaces_stiefel}

The tangent space to a manifold at a given point can be seen as a local vector space approximation to the manifold at that point. In practice, it is useful to perform the operations of vector addition and scalar multiplication, which would otherwise be impossible to perform on a manifold without leaving it, due to the manifold's curvature. Endowed with a Euclidean inner product, this vector space becomes a Euclidean space where we also have a notion of lengths. Here, we will directly focus on the tangent space to the Stiefel manifold. For a more precise definition of a tangent space in the general case, we refer the reader to \protect{\cite{AMS:2008}}.

The \emph{tangent space to the Stiefel manifold at a point} $X$ is \protect{\cite[\S 3.5.2]{AMS:2008}}
\[
   \mathrm{T}_{X}\Stnp = \lbrace Z \in \Rnp \colon  X\tr Z + Z\tr X = 0 \rbrace.
\]
An alternative way to characterize the tangent space $ \mathrm{T}_{X} \Stnp $ is as follows. Let $ X_{\perp} $ be an orthonormal matrix whose columns span the orthogonal complement of $ \mathrm{span}(X) $.
Since both $ X $ and $ X_{\perp} $ are orthonormal, together they form an orthonormal basis of the space $ \Rnp $, so that we can decompose any tangent vector $ \dt{X} $ on this basis as
\[
   \dt{X} = X \Omega + X_{\perp} K,
\]
$ \Omega $ being a $p$-by-$p$ skew-symmetric matrix, $ \Omega \in \cS_{\mathrm{skew}}(p) $, and $ K \in \R^{(n-p) \times p} $, with no restriction on $ K $. So the tangent space to the Stiefel manifold can also be characterized by
\begin{equation}\label{eq:tg_space_stiefel}
   \mathrm{T}_{X}\Stnp = \lbrace X \Omega + X_{\perp} K \colon  \Omega = -\Omega\tr, \ K \in \R^{(n-p)\times p} \rbrace.
\end{equation}
With this characterization in mind, and with the fact that $ \dim\!\big(\Stnp\big) = \dim\!\big(\mathrm{T}_{X}\Stnp\big) $, it is straightforward to work out the \emph{dimension of the Stiefel manifold} as
\[
   \dim(\Stnp) = \dim(\cS_{\mathrm{skew}}(p)) + \dim(\R^{(n-p) \times p}) = \tfrac{1}{2}p(p-1) + (n-p) p = np - \tfrac{1}{2}p(p+1).
\]
For ease of notation, we denote the dimension of $ \Stnp $ by $ \bar{n} $.

\subsection{Riemannian metric and distance}\label{sec:riem_metric_dist}

To define a distance on a given manifold $\cM$, we still need a notion of length that applies to tangent vectors. To this aim, we endow the tangent space $ \mathrm{T}_{x}\cM $ with an \emph{inner product} $ \langle \cdot, \cdot \rangle_{x} $, i.e., a bilinear, symmetric positive definite form. The subscript $ x $ in $ \langle \cdot, \cdot \rangle_{x} $ indicates that, in general, the inner product depends on the point $ x \in \cM $.
The inner product $ \langle \cdot, \cdot \rangle_{x} $ induces a norm $ \| \xi_{x} \|_{x} = \sqrt{\langle \xi_{x}, \xi_{x} \rangle_{x}} $ on $ \mathrm{T}_{x}\cM $.
The introduction of the inner product structure permits to define the notion of Riemannian manifold.
A manifold $ \cM $ endowed with a smoothly-varying inner product (called \emph{Riemannian metric} $ g $) is called \emph{Riemannian manifold}.
Strictly speaking, a Riemannian manifold is a couple $ (\cM,g) $, i.e., a manifold with a Riemannian metric.
A vector space endowed with an inner product structure is a particular case of Riemannian manifold called \emph{Euclidean space}.

The \emph{length of a curve} $ \gamma \colon [a,b] \to \cM $ on a Riemannian manifold $ ( \cM, g ) $ is
\[
   L(\gamma) = \int_{a}^{b} \sqrt{g(\dt{\gamma}(t),\dt{\gamma}(t))} \, \mathrm{d}t.
\]

The \emph{Riemannian distance} is defined as the shortest path between two points $ x $ and $ y $
\[
   d \colon \cM \times \cM \to \R \colon d(x,y) = \inf_{\Gamma} L(\gamma),
\]
where $ \Gamma $ denotes the set of all curves $ \gamma $ in $ \cM $ joining points $ x $ and $ y $.

\subsection{Normal space}\label{sec:normal_space}

Let $ \cM $ be an embedded submanifold of a Riemannian manifold $ \ \ \overbar{\cM} $. Since $ \cM $ is a submanifold, it can inherit the Riemannian metric from its embedding space $ \ \overbar{\cM} $
\[
   g_{x} (\xi,\zeta) = \bar{g}_{x}(\xi,\zeta), \qquad \xi, \zeta \in \mathrm{T}_{x}\cM.
\]

The orthogonal complement of $ \mathrm{T}_{x}\cM $ in $ \mathrm{T}_{x} \ \overbar{\cM} $ is called \emph{normal space} to $ \cM $ at $ x $ and it is defined by
\[
   (\mathrm{T}_{x}\cM)^{\perp} = \left\lbrace \xi \in \mathrm{T}_{x} \ \overbar{\cM} \colon \bar{g}_{x}(\xi,\zeta) = 0, \quad \forall \zeta \in \mathrm{T}_{x}\cM \right\rbrace.
\]

Any tangent vector $ \xi \in \mathrm{T}_{x} \ \overbar{\cM} $ can be uniquely decomposed into
\[
   \xi = \P_{x}\xi + \P_{x}^{\perp} \xi,
\]
where $ \P_{x} $ and $ \P_{x}^{\perp} $ denote the orthogonal projections onto $ \mathrm{T}_{x}\cM $ and $ (\mathrm{T}_{x}\cM)^{\perp} $, respectively.

The tangent space to $ \Stnp $ at $ X $ is given by~\eqref{eq:tg_space_stiefel}. The Riemannian metric inherited by $ \mathrm{T}_{X}\Stnp $ from the embedding space $ \Rnp $ is
\[
   \langle \xi, \eta \rangle_{X} = \trace(\xi\tr \eta).
\]
The normal space is given by those matrices $ M $ such that 
\[
   \langle \xi, M \rangle_{X} = 0, \quad \forall \xi \in \mathrm{T}_{X}\Stnp.
\]
Take $M$ in the form $ M = XS $, with $ X \in \Stnp $ and $ S $ a $p$-by-$p$ symmetric matrix, $ S \in \cS_{\mathrm{sym}}(p) $. Then, one can easily verify that
\[
   \langle \xi, M \rangle_{X} = \trace(\xi\tr M) = \trace((\Omega_{\xi}\tr X\tr + K_{\xi}\tr X_{\perp}\tr)\,XS) = \trace(\Omega_{\xi}\tr S) = 0.
\]
Thus, the\emph{ normal space to the Stiefel manifold at a point} $ X $ is given by
\[
   (\mathrm{T}_{X}\Stnp)^{\perp} = \lbrace XS \colon S \in \cS_{\mathrm{sym}}(p)\rbrace.
\]

\subsection{Projectors} \label{sec:projectors}

The \emph{projection onto the tangent space} $ \mathrm{T}_{X}\Stnp $ is
\[
   \P_{X}\xi = X \mathrm{skew}(X\tr \xi) + (I-XX\tr)\,\xi,
\]
and the \emph{projection onto the normal space} $ (\mathrm{T}_{X}\Stnp)^{\perp} $ is
\[
   \P_{X}^{\perp}\xi = X \mathrm{sym}(X\tr \xi).
\]
Before stating our problem, we still need to introduce the notion of geodesics.

\subsection{Geodesics, exponential mapping and logarithm mapping} \label{sec:geodesic_exp_log}

We first give a general survey about geodesics, then switch to the particular case of the Stiefel manifold in the next section.
Geodesics are defined as curves with zero ``acceleration'', i.e., they solve the second-order ordinary differential equation (ODE)
\[
   \frac{\D^2}{\mathrm{d}t^2} \, \gamma(t) = 0,
\]
where $ \tfrac{\D^2}{\mathrm{d}t^2} $ denotes the \emph{acceleration vector field}.
Geodesics allow us to introduce the \emph{Riemannian exponential} $ \Exp_{x}\colon \mathrm{T}_{x}\cM \to \cM $ that maps a tangent vector $\xi = \dt{\gamma}(0) \in \mathrm{T}_{x}\cM $ to the geodesic endpoint $\gamma(1) = y $: $ \Exp_{x}(\xi) = y$. Figure~\ref{fig:exponential_map} illustrates these concepts for the unit sphere $ \cS^{2} $.

\begin{figure}[htbp]
\centering
\includegraphics[width=0.50\columnwidth]{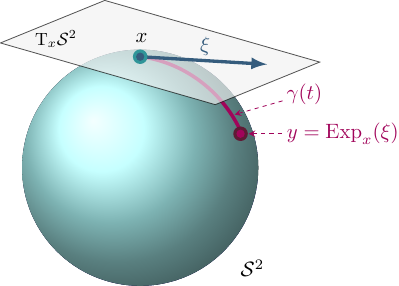}
\caption{The Riemannian exponential map on the sphere.}
\label{fig:exponential_map}
\end{figure}

The Riemannian exponential is a local diffeomorphism, i.e., it is locally invertible, and its inverse is called the \emph{Riemannian logarithm} of $y$ at $x$: $ \Log_{x}(y) = \xi $.

Thanks to a result of Riemannian geometry known as \emph{Gauss's lemma}, the exponential map can be locally understood as a \emph{radial isometry} \protect{\cite[Lemma~3.5]{doCarmo:1992}}. This means that one can measure the distance between two sufficiently close points on the manifold by computing the norm of the corresponding vector in the tangent space, i.e.,
\[
   d(x,y) = \| \Log_{x}(y) \|.
\]
This is the result we anticipated in the introduction, which is very useful in practical applications since it allows us to compute the distance $d(x,y)$ as the norm of the vector $\Log_{x}(y)$. The problem of our interest then becomes how to compute $\Log_{x}(y)$.

The diffeomorphicity of the exponential mapping is closely linked to the behavior of geodesics. While in Euclidean geometry straight lines are also distance-minimizing curves, in Riemannian geometry a geodesic $ \gamma \colon [0,t] \to \cM $ emanating from a point $ x $ is distance-minimizing only for small values of the parameter $t>0$.
In general, there exists a point $\gamma(t_{c})$, called \emph{cut point}, where the distance-minimizing property first breaks down \protect{\cite[Chapter~III]{Sakai:1996}}. The union of the cut points of all geodesics emanating from $ x $ is called \emph{cut locus} of $ x $; it is the boundary of the (star-shaped) domain in which $ \Exp_{x}\colon \mathrm{T}_{x}\cM \to \cM $ is a diffeomorphism. 
The cut locus is closely linked not only to local properties such as the curvature of $\cM$, but also to global topological properties \protect{\cite{Sakai:1996,Afsari:2013}}.

The \emph{injectivity radius} at a point $ x $ of a Riemannian manifold $\cM$ is the largest radius for which the exponential map $\Exp_{x}$ is a diffeomorphism from the tangent space to the manifold; it is the least distance from $ x $ to the cut locus of $ x $. The \emph{global injectivity radius} of a manifold is the infimum of all the injectivity radii at all points of the manifold.
Given two points $ x $ and $ y $ on a manifold $ \cM $, if $ d(x,y) < \inj(\cM) $, then there exists a unique minimizing geodesic from $ x $ to $ y $. For the Stiefel manifold, the injectivity radius is lower bounded by $ 0.89\,\pi $ \protect{\cite[(5.13)]{Rentmeesters:2013}}.

\subsection{Geodesics on the Stiefel manifold}

As mentioned in section~\ref{sec:riem_metric_dist}, a Riemannian metric has to be specified to turn $\Stnp$ into a Riemannian manifold; generally, different choices are possible.
In this paper, we consider the non-Euclidean \emph{canonical metric} inherited by $\Stnp$ from its definition as a quotient space of the orthogonal group \protect{\cite[(2.39)]{Edelman:1998}}. Given $ X \in \Stnp $ and $ \xi, \zeta \in \mathrm{T}_{X}\Stnp $, the canonical metric reads
\begin{equation}\label{eq:formula_canonical_metric}
    g_{c}(\xi,\zeta) = \trace\!\big( \xi\tr ( I - \tfrac{1}{2}XX\tr) \, \zeta \big).
\end{equation}
The canonical metric induces the \emph{canonical norm}, defined as
\[
    \| \xi \|_{\mathrm{c}} = \sqrt{g_{c}(\xi,\xi)}.
\]
The reader can verify that
\[
   \| \xi \|_{\mathrm{c}}^{2} = \tfrac{1}{2} \| \Omega \|_{\F}^{2} + \| K \|_{\F}^{2}.
\]
The \emph{embedded metric} is the metric inherited by the Stiefel manifold as an embedded submanifold of $ \R^{n} $, i.e., $ g_{e}(\xi,\xi) = \trace( \xi\tr \xi ) $, and leads to very similar derivations. With the embedded metric, the induced norm is simply the Frobenius norm
\[
    \| \xi \|_{\mathrm{e}} = \sqrt{g_{e}(\xi,\xi)} = \sqrt{\trace( \xi\tr \xi )} \eqqcolon \| \xi \|_{\F},
\]
and
\[
   \| \xi \|_{\F}^{2} = \| \Omega \|_{\F}^{2} + \| K \|_{\F}^{2}.
\]
The only difference with respect to the canonical norm is the absence of the factor $\tfrac{1}{2}$ in front of $\| \Omega \|_{\F}^{2}$. This calculation highlights the fact that in contrast to the embedded norm, the canonical norm only takes into account once the $ \tfrac{1}{2} p (p-1) $ coefficients of $ \Omega $. Indeed, in the remaining part of this paper, we will only use the canonical metric. 

By endowing the Stiefel manifold with the canonical metric, one can derive the following \emph{second-order ordinary differential equation for the geodesic} \protect{\cite[(2.41)]{Edelman:1998}}
\begin{equation}\label{eq:ode_geodesics}
   \ddt{Y} + \dt{Y}\dt{Y}\tr Y + Y \big( (Y\tr\dt{Y})^{2} + \dt{Y}\tr\dt{Y}\big) = 0,
\end{equation}
where $Y \equiv Y(t)$.

An explicit formula for a geodesic that realizes a tangent vector $ \xi $ with base point $ Y_{0} $ is \protect{\cite[(2.42)]{Edelman:1998}}
\begin{equation}\label{eq:closed-form-sol-geodesic}
    Y(t) = Q \exp\!\left( \begin{bmatrix}
        \Omega   &   -K\tr \\
        K        &    O_{n-p}
    \end{bmatrix} t \right)
    \begin{bmatrix}
        I_{p} \\
        O_{(n-p)\times p}
    \end{bmatrix},
\end{equation}
with $ Q = \big[ Y_{0} \ Y_{0\perp} \big] $, $Y_{0\perp}$ being any matrix whose columns span $\mathcal{Y}_{0}^{\perp}=(\mathrm{span}(Y_{0}))^{\perp}$.
If $ t = 1 $, this is precisely the Riemannian exponential on the Stiefel manifold. In this paper, we denote by $ A $ the matrix in the argument of the matrix exponential $ \exp $.

\begin{remark}\label{rmk:Y_0perp}
The matrix $Y_{0\perp}$ does not need to be orthonormal. Indeed, its only requirement is that it has to span $\mathcal{Y}_{0}^{\perp}=(\mathrm{span}(Y_{0}))^{\perp}$, i.e., the orthogonal subspace to $\mathcal{Y}_{0}=\mathrm{span}(Y_{0})$. See appendix \ref{sec:freedom}. For the convenience of our analysis and implementation, we always assume that $Y_{0\perp}$ is orthonormal so that $ Q = \big[ Y_{0} \ Y_{0\perp} \big] $ is an orthogonal matrix.
\end{remark}

\section{Problem statement}\label{sec:problem_statement}

In this section, we state the problem more formally.
Given two points $Y_0$, $Y_1$ on $\Stnp$ that are sufficiently close to each other, finding the distance between them is equivalent to finding the tangent vector $\xi^{\ast} \in \mathrm{T}_{Y_0}\Stnp$ with the shortest possible length such that \protect{\cite{Lee:2018,boumal2023intromanifolds}}
\[
\Exp_{Y_0}(\xi^{\ast}) = Y_1,
\]
where $\Exp_{Y_0}$ denotes the Riemannian exponential mapping at $Y_0$. 
The solution to this problem is equivalent to the Riemannian logarithm of $Y_{1}$ with base point $Y_{0}$
\[
 \xi^{\ast} = \Log_{Y_0}(Y_1).
\]
We refer the reader to Figure~\ref{fig:single_shooting} for an illustration of the problem statement.

In terms of the differential equation \eqref{eq:ode_geodesics} governing the geodesic, the problem statement may be written as follows:

Find $ \xi^{\ast} \equiv \dt{Y}(0) \in \mathrm{T}_{Y_0}\Stnp$ such that the second-order ODE
\begin{equation}\label{eq:bvp}
\ddt{Y} = - \dt{Y}\dt{Y}\tr Y - Y \big( (Y\tr\dt{Y})^{2} + \dt{Y}\tr\dt{Y}\big), \quad
\text{with boundary conditions} \
\begin{cases}
Y(0)   = Y_{0}, \\
Y(1) = Y_{1},
\end{cases}
\end{equation}
is satisfied. This problem is known as a \emph{boundary value problem} (BVP).

\section{Single shooting method}\label{sec:single_shooting}

The single shooting is a classical numerical scheme for solving boundary value problems. The main idea is to reformulate the BVP as an initial value problem (IVP), guess the initial value of the acceleration, and then solve a nonlinear equation. It turns a BVP into a root-finding problem. The zeros of the nonlinear equation can be computed with any root-finding algorithm, but single shooting typically uses Newton's method, which enjoys quadratic convergence.

In this section, we give the details on how to apply the single shooting method to the endpoint geodesic problem on the Stiefel manifold.
We start by recasting the BVP \eqref{eq:bvp} into an IVP.
Let $Z_{1}(t) = Y(t) $, $Z_{2}(t) = \dt{Y}(t)$ denote the geodesic and its derivative, respectively, and let
\[
Z(t) = \begin{pmatrix}
Z_1(t) \\
Z_2(t)
\end{pmatrix}.
\]
We get the initial value problem (we omit the dependence on $t$)
\begin{equation}\label{eq:ivp}
\begin{split}
\dt{Z} = \begin{pmatrix}
\dt{Z}_1 \\
\dt{Z}_2
\end{pmatrix} = \begin{pmatrix}
Z_2 \\
- Z_2 Z_2\tr Z_1 - Z_1 \big( (Z_1\tr Z_2)^{2} + Z_2\tr Z_2 \big)
\end{pmatrix}, \\
\text{with initial conditions} \
Z(0) = \begin{pmatrix}
Z_1(0) \\
Z_2(0)
\end{pmatrix} =
\begin{pmatrix}
Y_0 \\
\xi
\end{pmatrix}.
\end{split}
\end{equation}
Here, $ \xi $ is the unknown such that $ Z_{1}(1) = Y_{1} $.
In practice, since we already have the explicit formula \eqref{eq:closed-form-sol-geodesic} for the geodesic $Z_1(t) $, we do not need to integrate the initial value problem \eqref{eq:ivp}.
The explicit formula for $Z_2$ is just the derivative of $Z_1$ with respect to $t$, namely,
\[
Z_2(t) = Q \, \exp\!\left(
\begin{bmatrix}
\Omega   &   -K\tr \\
K        &    O_{n-p}
\end{bmatrix} t \right)
\begin{bmatrix}
\Omega  \\
K     
\end{bmatrix}.
\]
Now let us define the function
\begin{equation}\label{eq:nonlinear-eq}
F(\xi) = \vecop\!\big( Z_1(1,\xi) - Y_1 \big),
\end{equation}
where we emphasize the dependence on $ \xi $.
Roughly speaking, this represents the mismatch between $ Z_1(1)$, i.e., the geodesic at $ t = 1$, and the boundary condition $Y_1$ we wish to enforce.
Our goal is to find $\xi^{\ast}$ such that
\[
   F(\xi^{\ast}) = 0.
\]
As mentioned above, this is a root-finding problem of a nonlinear equation, which can be solved by \emph{Newton's method}. To apply Newton's method, we need the Jacobian matrix of $F(\xi)$ with respect to $\xi$, denoted $J_{F}^{\xi}$.
This is actually $J_{Z_1}^{\xi}$, the Jacobian matrix of $Z_1$ with respect to $\xi$, since $Y_1$ appearing in $F(\xi)$ is not a function of $\xi$.

Here, we first give the algorithm, and then in the following sections, we will explain in more detail the derivation and the algorithmic components.
The pseudocode for the single shooting method on the Stiefel manifold is given in Algorithm~\ref{algo:simple-shooting}. As a stopping criterion, the norm of $ F $ is often used; in section~\ref{sec:SS_initial_guess}, we consider the norm of the residual $ \delta\xi^{(k)} $. 

\medskip

\begin{algorithm}
   \SetAlgoLined
   Given $Y_0$, $Y_1$\;
   \KwResult{$\xi^{\ast}$ such that $\Exp_{Y_0}(\xi^{\ast}) = Y_1$.}
   Compute the initial guess $ \xi^{(0)} $ (according to Algorithm~\ref{algo:initial-guess})\;
   Set $\xi^{(k)} = \xi^{(0)}$\;
   \While{a stopping criterion is met}{
      Compute Jacobian matrix $J_{Z_1}^{(k)}$ (see later~\eqref{eq:J_Z_1_x})\;
      Compute $F^{(k)} = \vecop\!\big( Z_1^{(k)} - Y_1 \big) $\;
      Solve $F^{(k)} + J_{Z_1}^{\xi^{(k)}}\, \delta\xi^{(k)} = 0$ for $\delta\xi^{(k)}$\;
      Update $\xi^{(k)} \leftarrow \xi^{(k)} + \delta\xi^{(k)}$\;
   }
   \caption{Single shooting on the Stiefel manifold.}\label{algo:simple-shooting}
\end{algorithm}

\subsection{Parametrization of the tangent space} \label{sec:ss_parametrization}

The tangent vector $\xi $ belongs to $ \R^{n \times p}$, but by inspecting its structure,
\[
   \xi = Y_{0} \Omega + Y_{0\perp} K,
\]
one can observe that it only depends on $ \bar{n} \coloneqq np - \tfrac{1}{2} (p+1)$ parameters (the dimension of the Stiefel manifold). Therefore, we can express $\xi$ as a function of these $ \bar{n} $ parameters. By standard linear algebra arguments, it is possible to find a matrix $ B \in \mathrm{R}^{p^2 \times \frac{1}{2} p (p-1)}$ whose columns form a basis of $\mathcal{S}_{\mathrm{skew}}$. This allows us to write the vectorization of $\Omega$ as
\[
\vecop(\Omega) = B s,
\]
for some $ s \in \R^{\frac{1}{2} p (p-1)}$ being a column vector representing $\Omega$ in the basis $B$ of $\mathcal{S}_{\mathrm{skew}}$.
The vectorization of the matrix $ K $ is simply $ k = \vecop(K) \in \R^{(n-p)p} $.
Hence, we can collect the coefficients of $\xi$ in a single vector
\[
x = \begin{pmatrix}
s \\
k
\end{pmatrix} \in \R^{\bar{n}}.
\]
Let us call
\begin{equation}\label{eq:matrix_A}
   A(x) = \begin{bmatrix}
             \Omega   &   -K\tr \\
             K        &   O_{n-p}
          \end{bmatrix}
\end{equation}
the matrix in the argument of the exponential appearing in the geodesic \eqref{eq:closed-form-sol-geodesic}. It is a function of $ x $ because the matrices $ \Omega $ and $ K $ are formed by the coefficients of the vector $ x $. Then~\eqref{eq:closed-form-sol-geodesic} can be rewritten as
\[
Z_1(1,x) = Q \exp ( A(x) )
\begin{bmatrix}
I_{p} \\
O_{(n-p)\times p}
\end{bmatrix}.
\]
Then~\eqref{eq:nonlinear-eq} becomes
\begin{equation}\label{eq:nonlinear-eq-x-explicit}
   F(x) = Z_1(1,x) - Y_1,
\end{equation}
where we highlight the dependence on $ x $ and omit the $ \vecop $ operator for readability.
Newton's method consists in solving successive linearizations of this equation, i.e.,
\begin{equation}\label{eq:pert-eq}
   F(x + \delta x) = Z_1(x + \delta x) - Y_1 = 0.
\end{equation}

Here, the term $Z_1(x + \delta x)$ is the expression for the geodesic when applying a small perturbation $ \delta x $ to the vector $ x $. From the expansion of $Z_1(x + \delta x)$ we will be able to read the Jacobian of $ Z_{1} $ with respect to $x$, denoted $J_{Z_1}^{x}$. Applying matrix perturbation theory, we obtain
\begin{equation}\label{eq:perturbation-geodesic}
Z_1(x+\delta x ) = Z_1(x) + 
Q \, \D \exp(A(x))\big[\D \! A(x)[\delta x]\big]
\begin{bmatrix}
I_{p} \\
O_{(n-p)\times p}
\end{bmatrix}
+ o(\Vert \delta x \Vert),
\end{equation}
where the notation $ \D \exp(A(x))\big[\D \! A(x)[\delta x]\big] $ denotes the Fr\'{e}chet derivative of the matrix exponential at $A(x)$ in the direction of $ \D \! A(x)[\delta x] $.
A chain rule is involved in this term, so we first need to find $ \D \! A(x)[\delta x] $. The perturbation of $A(x)$ yields
\[
   A(x+\delta x ) = A(x) +  \D \! A(x)[\delta x] + o(\Vert \delta x \Vert).
\]
Let $\blkvec$ be the operator that performs a block-wise vectorization of $A(x)$, namely,
\begin{equation*}
\blkvec( A(x) ) = \blkvec\!\left( \begin{bmatrix}
\Omega   &   -K\tr \\
K        &    O_{n-p}
\end{bmatrix} \right) = 
\begin{bmatrix}
\vecop(\Omega) \\
\vecop(K) \\
\vecop(-K\tr) \\
\vecop(O_{n-p})
\end{bmatrix}.
\end{equation*}
Using the vectorization of $ \Omega $ and $ K $ introduced above, we obtain
\begin{equation*}
   \blkvec( A(x) ) =
   \begin{bmatrix}
      B   &   O_{p^2\times p(n-p)} \\
      O_{p(n-p)\times \frac{1}{2}p(p-1)}   &   I_{p(n-p)} \\
      O_{p(n-p)\times \frac{1}{2}p(p-1)}   &  -\Pi_{n-p,p} \\
      O_{(n-p)^{2}\times \frac{1}{2}p(p-1)}   &   O_{(n-p)^{2}\times p(n-p)}
   \end{bmatrix}
   \begin{pmatrix}
      s \\
      k
   \end{pmatrix},
\end{equation*}
where $\Pi_{n-p,p}$ is the \emph{perfect shuffle matrix} defined by
\[
   \vecop(X\tr) = \Pi_{n-p,p} \, \vecop(X).
\]
From the last equation, we can identify the Jacobian matrix of $A(x)$ with respect to $x$ as
\begin{equation}\label{eq:J_A_x}
   J_{A(x)}^{x} = 
   \begin{bmatrix}
   B   &   O_{p^2\times p(n-p)} \\
   O_{p(n-p)\times \frac{1}{2}p(p-1)}   &   I_{p(n-p)} \\
   O_{p(n-p)\times \frac{1}{2}p(p-1)}   &  -\Pi_{n-p,p} \\
   O_{(n-p)^{2}\times \frac{1}{2}p(p-1)}   &   O_{(n-p)^{2}\times p(n-p)}
   \end{bmatrix}.
\end{equation}
Hence $ \vecop( \D \! A(x)[\delta x] ) = J_{A(x)}^{x} \, \delta x $. 
We still need a map that links the block-wise vectorization $ \blkvec $ to the ordinary column-stacking vectorization $ \vecop $. Since this mapping is linear, it can be represented by a matrix $ T \in \R^{n^2 \times n^2} $
\[
   \vecop( \D \! A(x)[\delta x] ) = T \cdot \blkvec(  \D \! A(x)[\delta x] ).
\]

Now we go back to the perturbation of the matrix exponential, whose expansion is
\[
   \exp( A + \delta A ) = \exp(A) + \D \exp(A) [\delta A] + o(\Vert \delta A \Vert),
\]
where $ \D \exp(A) [\delta A] $ is the Fr\'{e}chet derivative of the matrix exponential at $A$ in the direction of $ \delta A $. Vectorizing $ \D \exp(A) [\delta A] $ we get
\[
\vecop( \D \exp(A) [\delta A] ) = J_{\exp(A)}^{A} \vecop(\delta A),
\]
with $J_{\exp(A)}^{A}$ being the Jacobian of the matrix exponential. A closed-form expression for $J_{\exp(A)}^{A}$ is given in \protect{\cite{Higham:2008,Najfeld:1995}}, 
\begin{equation}\label{eq:J_expA_A}
   J_{\exp(A)}^{A} = \Big( \exp(A\tr/2) \otimes \exp(A/2) \Big) \, \sinch\!\left( \tfrac{1}{2} [ A\tr \oplus (-A)] \right),
\end{equation}
where $\oplus$ denotes the Kronecker sum: $A\tr \oplus (-A) = A\tr \otimes I_n - I_n \otimes A$, and $\sinch$ is the hyperbolic $\sinc$,
\[
   \sinch(y) = \sinh(y)/y.
\]
Vectorizing the second term on the right-hand side of \eqref{eq:perturbation-geodesic} and wrapping things up, we get
\begin{align*}
\vecop\!\left( Q \D \exp(A(x))\big[\D \! A(x)[\delta x]\big]
\begin{bmatrix}
I_{p} \\
O
\end{bmatrix} \right) & = \left( \big[ I_{p} \ \ O \big] \otimes Q \right)
\vecop\!\Big(\D \exp(A(x))\big[\D \! A(x)[\delta x]\big]\Big) \\
& = \left( \big[ I_{p} \ \ O \big] \otimes Q \right) J_{\exp(A)}^{A}\,\vecop(\D \! A(x)[\delta x])\\
& = \left( \big[ I_{p} \ \ O \big] \otimes Q \right) J_{\exp(A)}^{A}\,T\,\blkvec(\D \! A(x)[\delta x])\\
& = \left( \big[ I_{p} \ \ O \big] \otimes Q \right) J_{\exp(A)}^{A}\,T\, J_{A(x)}^{x} \, \delta x.
\end{align*}
From the last equation, we can identify the sought Jacobian matrix of $Z_1$ with respect to $x$, i.e.,
\begin{equation}\label{eq:J_Z_1_x}
   J_{Z_1}^{x} = \left( \big[ I_{p} \ \ O_{p\times (n-p)} \big] \otimes Q \right) J_{\exp(A)}^{A} \, T \, J_{A(x)}^{x}.
\end{equation}
Notice its dimension $ J_{Z_1}^{x} \in \R^{np \times \bar{n} } $.

Finally, the linearization of \eqref{eq:pert-eq} is
\[
Z_1(x) + J_{Z_1}^{x}\,\delta x - Y_1 = 0,
\]
i.e., the Newton update
\[
J_{Z_1}^{x}\,\delta x = - F(x).
\]
\begin{remark}
This is an \emph{overdetermined} system to be solved for $\delta x$.
Indeed, $ J_{Z_1}^{x} \colon \R^{\bar{n}} \to \R^{np} $, and since for all $ p \geqslant 1 $ one has $ np > \bar{n} $, there are always more equations than unknowns. The system is overdetermined, but Newton's equation has a solution since $ F(x) = 0 $ is assumed to have a solution, since we assume that there exists a geodesic connecting $ Y_{0} $ and $ Y_{1} $.
\end{remark}

\subsection{The initial guess}\label{sec:SS_initial_guess}

It is well known that Newton's method exhibits only local convergence properties, which means that the method requires a sufficiently good initial guess to converge.  Therefore, selecting a ``good enough'' initial guess is crucial. This section outlines our approach to initializing Newton's method, which involves choosing an initial guess $\xi^{(0)}$ that is close enough to $\xi^{\ast}$.
To this aim, we use a first-order approximation of the matrix exponential $ \exp(A) \approx I + A $ in \eqref{eq:nonlinear-eq-x-explicit} and solve for $\xi$. 
This yields the first-order approximation to the solution $\xi^{\ast}$ as
\[
   \bar{\xi} = Y_1 - Y_0.
\]
This is no longer an element of the tangent space, so we need to project it onto $ \mathrm{T}_{Y_0}\Stnp $. We expect it to be a satisfactory initial approximation to the sought tangent vector $\xi^{\ast}$.
We recall from section~\ref{sec:projectors} that the projection of a vector $\xi$ onto the tangent space to the Stiefel manifold at $Y$ is given by
\[
   \P_{Y} \! \xi = Y \mathrm{skew}(Y\tr\xi) + (I-YY\tr) \, \xi.
\]
The projection of $\bar{\xi}$ onto the tangent space at $Y_0$ is
\[
   \P_{Y_{0}} \! \bar{\xi} = Y_0 \,\mathrm{skew}\big(Y_0\tr(Y_1 - Y_0)\big) + (I_n - Y_0 Y_0\tr)(Y_1 - Y_0) = Y_1 - Y_0 \, \mathrm{sym}(Y_0\tr Y_1).
\]
To get $\xi^{(0)}$, we rescale this vector so that its norm is equal to the norm of $\bar{\xi}$, i.e.,
\[
   \xi^{(0)} = \frac{\left\Vert \bar{\xi} \right\Vert}{\left\Vert \P_{Y_{0}} \! \bar{\xi} \right\Vert} \, \P_{Y_{0}} \! \bar{\xi}.
\]
This procedure is summarized in Algorithm~\ref{algo:initial-guess} and illustrated in Figure~\ref{fig:SS_initial_guess}.

\begin{figure}[htbp]
   \centering
   \includegraphics[width=0.55\columnwidth]{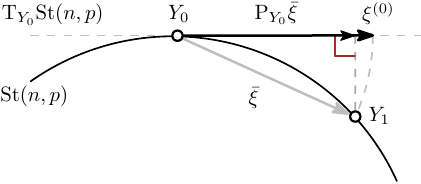}
   \caption{Initial guess for the single shooting method on the Stiefel manifold.}
   \label{fig:SS_initial_guess}
\end{figure}

\medskip

\begin{algorithm}
   \SetAlgoLined
   Given $Y_0$, $Y_1$\;
   Compute $ \bar{\xi} = Y_1 - Y_0 $\;
   Compute $ \P_{Y_{0}} \! \bar{\xi} = Y_1 - Y_0 \, \mathrm{sym}(Y_0\tr Y_1) $\;
   Compute $ \xi^{(0)} = \frac{\left\Vert \bar{\xi} \right\Vert}{\left\Vert \P_{Y_{0}} \! \bar{\xi} \right\Vert} \, \P_{Y_{0}} \! \bar{\xi} $\;
   Return  $ \xi^{(0)} $.
   \caption{Initial guess for the single shooting method on the Stiefel manifold.}\label{algo:initial-guess}
\end{algorithm}

\subsection{A smaller formulation}\label{sec:baby_problem}

It can be shown that the endpoint geodesic problem on $\Stnp$ is equivalent to an endpoint geodesic problem on $\mathrm{St}(2p,p)$ (see \protect{\cite{Edelman:1998,Rentmeesters:2013}}).
In the formulation above, the complexity of computing the matrix exponential is $O(n^3)$, but if $p \ll n $, then the smaller formulation can be used, and its computational cost is only $O(p^3)$. In practice, it makes sense to consider the formulation on $\mathrm{St}(2p,p)$ only if $ p < \tfrac{n}{2} $.
This section shows how this smaller formulation can be obtained.

Consider the same problem setting as in the previous sections, and let the QR factorization of $K$ be
\[
   K = \big[ Q \ \ Q_{\perp} \big]
      \begin{bmatrix}
         R \\ O_{(n-2p)\times p}
      \end{bmatrix} = Q R,
\]
where $\big[ Q \ \ Q_{\perp} \big] \in \R^{(n-p)\times (n-p)} $ is the orthogonal factor of $K$, with $Q \in \R^{(n-p)\times p}$ and $ Q_{\perp} \in \R^{(n-p)\times (n-2p)} $ orthonormal matrices, and $R\in\R^{p\times p}$ is upper triangular.

In appendix \ref{sec:baby} we show that
\begin{equation}\label{eq:baby}
Y_{1} = \big[ Y_{0} \ \ Y_{0\perp} Q \big] \, \exp\!\left( \begin{bmatrix} \Omega  &  -R\tr \\  R       &     O_p \end{bmatrix} \right)\begin{bmatrix}
I_{p} \\
O_{p}
\end{bmatrix}.
\end{equation}
Roughly speaking, the matrix $ Q $ can be pulled out from the matrix exponential, so that the matrix in $ \exp $ argument is only of size $2p$-by-$2p$.

Here, our aim is to find $ \Omega \in \R^{p\times p} $ and $ R \in \R^{p\times p} $ such that \eqref{eq:baby} holds true. Then, we can reconstruct vector as $\xi = Y_0 \Omega + Y_{0\perp} Q R $.

Let $Y_1$ be decomposed in the basis $\big[ Y_{0} \ \ Y_{0\perp} Q \big]$, and let $M$ and $N$ be the components of $Y_1$ in this basis
\begin{equation}\label{eq:Y1MN}
Y_1 = Y_0 M + Y_{0\perp} Q N.
\end{equation}
This implies that
\begin{equation}\label{eq:baby-problem}
\begin{bmatrix}
M\\
N
\end{bmatrix}=
\exp\!\left( \begin{bmatrix} \Omega  &  -R\tr \\  R       &     O_p \end{bmatrix} \right)\begin{bmatrix}
I_{p} \\
O_{p}
\end{bmatrix}.
\end{equation}
Left-multiplication of \eqref{eq:Y1MN} by $Y_0\tr $ and $Y_{0\perp}\tr $ yields, respectively $ Y_0\tr Y_1 = M$ and $ Y_{0\perp}\tr Y_1 = QN $. So one possible way to get $N$ out of $Y_{0\perp}\tr Y_1$ is to compute its QR factorization
\begin{equation}\label{eq:Y_0_perp_t_Y_1}
[ Q, N ] = \mathrm{qr}(Y_{0\perp}\tr Y_1).
\end{equation}

The remarkable observation is that \eqref{eq:baby-problem} describes an endpoint geodesic problem on $\mathrm{St}(2p,p)$ with base point
\[
\widehat{Y}_0 = \begin{bmatrix}
I_p \\
O_p
\end{bmatrix},
\]
with $\widehat{\xi} = \widehat{Y}_0 \Omega + \widehat{Y}_{0\perp} R$ the tangent vector to $\mathrm{St}(2p,p)$ at $\widehat{Y}_0$, and arrival point
\[
\widehat{Y}_1 = \begin{bmatrix}
M \\
N
\end{bmatrix}.
\]
Indeed, this problem setting yields the geodesic problem
\[
\widehat{Y}_1=
\underbrace{[ \widehat{Y}_0 \ \ \widehat{Y}_{0\perp}]}_{I_{2p}} \, \exp\!\left( \begin{bmatrix} \Omega  &  -R\tr \\  R       &     O_p \end{bmatrix} \right)\begin{bmatrix}
I_{p} \\
O_{p}
\end{bmatrix},
\]
which is exactly \eqref{eq:baby-problem}.

This problem can be solved via the single shooting method described above to find $\widehat{\xi}^{(k)} = \widehat{Y}_0 \Omega^{(k)} + \widehat{Y}_{0\perp} R^{(k)}$ at a given iteration $k$ (a stopping criterion is needed here). The components are given by $\Omega^{(k)} = \widehat{Y}_0\tr \widehat{\xi}^{(k)} $, $R^{(k)} = \widehat{Y}_{0\perp}\tr \widehat{\xi}^{(k)} $.
Finally, as promised, the tangent vector  $\xi^{(k)}$  of the original problem on $\Stnp$ can be recovered by
\[
\xi^{(k)} = Y_0 \Omega^{(k)} + Y_{0\perp} Q R^{(k)},
\]
where $Q\in\R^{(n-p)\times p}$ is the orthonormal factor of $Y_{0\perp}\tr Y_1$ as in \eqref{eq:Y_0_perp_t_Y_1}.

\subsection{Analysis of the Jacobian \texorpdfstring{$ J_{\exp(A)}^{A} $}{TEXT}}\label{sec:analysis_J_exp}

In this section, we state some more theoretical result about the Jacobian of the matrix exponential involved in the single shooting method.
As we did in section~\ref{sec:ss_parametrization}, let the Jacobian of $ \exp(A) $ with respect to $ A $ be as in~\eqref{eq:J_expA_A}, i.e.,
\[
   J_{\exp(A)}^{A} = \Big( \exp(A\tr/2) \otimes \exp(A/2) \Big) \, \sinch\!\left( \tfrac{1}{2} [ A\tr \oplus (-A)] \right).
\]
Since $A$ is normal, we can apply the theorems presented in appendix~\ref{app:sing_val_KX} to bound the singular values of $J_{\exp(A)}^{A}$.
We obtain the following lemma.

\begin{lemma}\label{lemma:sing_val_J_expA}
Let $A$ and $J_{\exp(A)}^{A}$ be as defined above, and let $ \alpha = \| A \|_{2} $. We have
\[
   \sigma_{\max}\big(J_{\exp(A)}^{A}\big) = 1  \qquad \text{and} \qquad  \sigma_{\min}\big(J_{\exp(A)}^{A}\big) = \left\vert \sinc \alpha \right\vert.
\]
\end{lemma}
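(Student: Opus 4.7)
The plan is to reduce the problem to a spectral calculation by exploiting that the matrix $A$ in~\eqref{eq:matrix_A} is real skew-symmetric. First I would observe that $A\tr = -A$, so $\exp(A\tr/2) = \exp(-A/2)$ is the inverse (and transpose) of the orthogonal matrix $\exp(A/2)$. Hence the prefactor $\exp(A\tr/2) \otimes \exp(A/2)$ is a Kronecker product of two real orthogonal matrices, and is therefore itself orthogonal. Since multiplication by an orthogonal matrix preserves singular values, it suffices to compute those of
\[
   M \coloneqq \sinch\!\left( \tfrac{1}{2} [ A\tr \oplus (-A)] \right).
\]

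Next I would invoke the spectral theorem. Using $A\tr = -A$, one rewrites the argument as $-\tfrac{1}{2}(A \otimes I + I \otimes A)$, which is real skew-symmetric and therefore normal; applying $\sinch$ via functional calculus preserves normality. The skew-symmetric matrix $A$ has purely imaginary eigenvalues $\{i\beta_j\}_{j=1}^{n}$ with $\beta_j \in \R$, so the standard Kronecker-sum eigenvalue rule gives that the eigenvalues of $\tfrac{1}{2}[A\tr \oplus (-A)]$ are $-\tfrac{i}{2}(\beta_j + \beta_k)$ for $j,k = 1,\ldots,n$. Using $\sinh(iy)/(iy) = \sinc(y)$, the eigenvalues of $M$ are the real numbers $\sinc\!\big((\beta_j + \beta_k)/2\big)$, and since $M$ is normal its singular values are the absolute values $|\sinc((\beta_j + \beta_k)/2)|$. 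This is exactly the structure that the theorems recalled in appendix~\ref{app:sing_val_KX} are meant to deliver, so I would simply cite them rather than re-derive from scratch.

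It then remains to identify the extremes of this set. For $\sigma_{\max}$: because $A$ is real skew-symmetric, its nonzero eigenvalues occur in conjugate pairs $\pm i\beta$, so one can choose $j,k$ with $\beta_j + \beta_k = 0$ (or simply take any zero eigenvalue of $A$ when present); this produces a singular value $|\sinc(0)| = 1$, which is the global maximum of $|\sinc|$, giving $\sigma_{\max} = 1$. For $\sigma_{\min}$: the choice $j = k$ with $|\beta_j| = \|A\|_2 = \alpha$ yields the candidate value $|\sinc(\alpha)|$. The main obstacle is to show this is actually the minimum and not merely one element of the set. Since $|\beta_j+\beta_k|/2 \leq \alpha$ for every pair $(j,k)$, all arguments lie in $[-\alpha,\alpha]$, and a direct computation shows $|\sinc|$ is strictly decreasing on $[0,\pi]$. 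Under the standing assumption that we work within the injectivity radius of $\Stnp$, which is at least $0.89\pi$ as recalled in section~\ref{sec:geodesic_exp_log}, the bound $\alpha < \pi$ holds, and the minimum of $|\sinc|$ on $[-\alpha,\alpha]$ is therefore attained at the endpoints, yielding $\sigma_{\min} = |\sinc\alpha|$.
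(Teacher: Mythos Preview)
Your argument is correct and reaches the same conclusion as the paper, but by a different route. The paper invokes Theorems~\ref{thm:max_sing_K} and~\ref{thm:min_sing_K} (the divided-difference characterisation of the extremal singular values of $J_{f}^{X}$) and then evaluates $|\exp[i x,i y]|$ directly; you instead work from the explicit factorisation~\eqref{eq:J_expA_A}, strip off the orthogonal prefactor $\exp(A^{\top}/2)\otimes\exp(A/2)$, and diagonalise the remaining $\sinch$ of a real skew-symmetric Kronecker sum. Both arrive at the same set of candidate singular values $|\sinc((\beta_j\pm\beta_k)/2)|$ (your sum convention and the paper's difference convention coincide because $\{\beta_j\}$ is closed under negation). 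Your approach is more elementary in that it avoids the divided-difference machinery; the paper's is shorter because the machinery is already in place in the appendix.

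One point to tighten: your justification that $\alpha<\pi$ follows from ``working within the injectivity radius'' is shaky, since $\alpha=\|A\|_2$ coincides with the Riemannian distance only in the sphere case $p=1$ (cf.\ section~\ref{sec:connection}). The paper's proof has exactly the same implicit assumption---its claim that the continuous minimum of $g(x,y)=|\sinc((x-y)/2)|$ over $[-\alpha,\alpha]^2$ is attained at the corners is likewise only valid for $\alpha\leq\pi$---but it leaves this unremarked. So you are no worse off; just state the monotonicity of $|\sinc|$ on $[0,\pi]$ and the hypothesis $\alpha\leq\pi$ as an explicit assumption rather than tying it to the injectivity radius.
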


\begin{proof}
   See appendix~\ref{app:proof_lemma_sing_val_J_expA}.
\end{proof}

Figure~\ref{fig:sinc_function} illustrates the function $ \left\vert \sinc \alpha \right\vert $ for $ \alpha $ in the interval $ [0,5] $. If $ \alpha = \pi $, then $ \sigma_{\min}\big(J_{\exp(A)}^{A}\big) = \left\vert \sinc \alpha \right\vert = 0 $, hence $ J_{\exp(A)}^{A} $ is singular. The figure also shows that in the interval $ [0, \pi ] $, $ \left\vert \sinc \alpha \right\vert $ is lower bounded by the straight line of equation $ 1 - \alpha/\pi $ (it is the tangent line to $ \left\vert \sinc \alpha \right\vert $ at $ \alpha = \pi $).

\begin{figure}[htbp]
  \centering
  \includegraphics[width=0.55\textwidth]{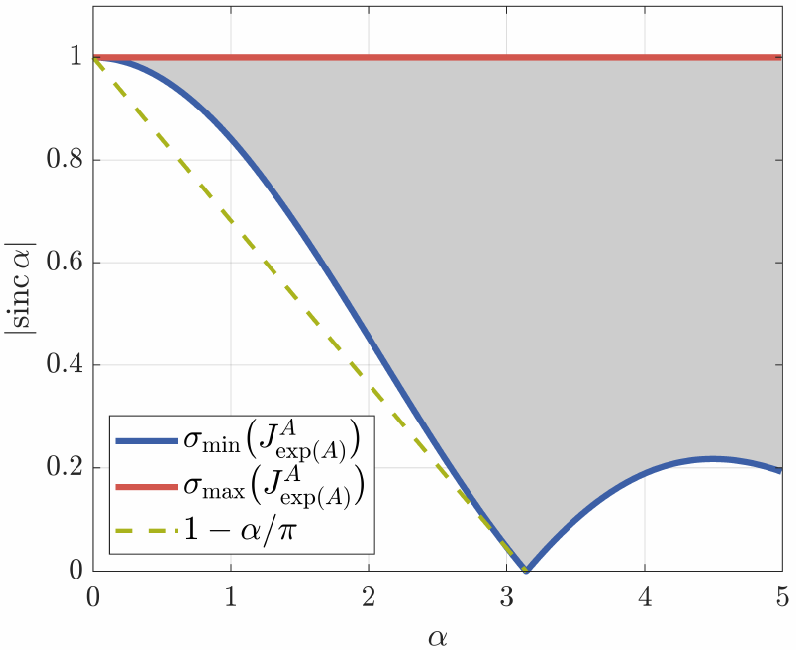}
  \caption{A plot of $ \left\vert \sinc \alpha \right\vert $ for $ \alpha $ in the interval $ [0,5] $.}\label{fig:sinc_function}
\end{figure}

\subsection{Connection between numerical experiments and existing Riemannian geometry results}\label{sec:connection}

In this section, we state some known results from Riemannian geometry and discuss how the numerical linear algebra from our analysis above reveals a nice connection with the cut locus, the injectivity radius, and the diffeomorphicity of the Riemannian exponential.

We first recall that in Euclidean geometry straight lines are also distance-minimizing curves. However, in Riemannian geometry, there exists a point $\gamma(t_{c})$, called \emph{cut point}, where the distance-minimizing property first breaks down \protect{\cite[Chapter~III]{Sakai:1996}}. The set $ C_{X} $ of these points along the geodesics emanating from $X$ is called the \emph{cut locus of} $X$.

The following standard result can be found in \protect{\cite[Lemma~5.7.9]{Petersen:2016}}, and is reported here for the reader's reference.

\begin{lemma}[\protect{\cite[Lemma~5.7.9]{Petersen:2016}}]\label{lemma:petersen}
   If $\xi_{1}$ is in the cut locus, then either
   \begin{enumerate}
      \item there exists another tangent vector $\xi_{2}$, different from $\xi_{1}$, such that
      \[
         \Exp_{X}(\xi_{1}) = \Exp_{X}(\xi_{2}),
      \]
      or
      \item $ \D \Exp_{X} $ is singular at $\xi_{1}$.
   \end{enumerate}
\end{lemma}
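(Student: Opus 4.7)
The plan is to follow the classical construction of approximating the cut point from beyond and exploiting compactness of short geodesics. Write $\xi_1 = t_c v$ where $v \in \mathrm{T}_X\cM$ is a unit vector and $t_c = \|\xi_1\|$ is the cut time of the geodesic $\gamma(t) = \Exp_X(tv)$, and let $p = \Exp_X(\xi_1) = \gamma(t_c)$ denote the cut point in question. By the very definition of the cut locus, for every $\epsilon > 0$ the segment $\gamma|_{[0,\,t_c+\epsilon]}$ fails to be length-minimizing, so there exists a strictly shorter path from $X$ to $\gamma(t_c+\epsilon)$.

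First I would set up an approximating sequence. Pick $\epsilon_k \downarrow 0$, set $p_k = \gamma(t_c+\epsilon_k)$, and invoking (geodesic) completeness via Hopf--Rinow choose minimizing geodesics from $X$ to $p_k$ with initial velocities $\eta_k$, so that $\|\eta_k\| = d(X,p_k) < t_c+\epsilon_k$ and $\Exp_X(\eta_k) = p_k$. The sequence $\{\eta_k\}$ lies in the closed ball of radius $t_c+\epsilon_1$ in $\mathrm{T}_X\cM$, so by Bolzano--Weierstrass a subsequence converges, $\eta_k \to \eta_\infty$ with $\|\eta_\infty\| \leq t_c$, and continuity of the exponential gives $\Exp_X(\eta_\infty) = \lim_k p_k = p = \Exp_X(\xi_1)$.

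Then I would split into the two cases of the dichotomy. If $\eta_\infty \neq \xi_1$, alternative (1) holds with $\xi_2 = \eta_\infty$. Otherwise $\eta_\infty = \xi_1$, and I compare $\eta_k$ with the \emph{straight} sequence $w_k = (t_c+\epsilon_k) v$: both satisfy $\Exp_X(\eta_k) = \Exp_X(w_k) = p_k$ and both converge to $\xi_1$, yet $w_k \neq \eta_k$ because $\|\eta_k\| < \|w_k\|$. Thus $\Exp_X$ fails to be injective on every neighborhood of $\xi_1$; were $\D \Exp_X$ nonsingular at $\xi_1$, the inverse function theorem would make $\Exp_X$ a local diffeomorphism there, in particular injective, a contradiction. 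Hence $\D \Exp_X$ is singular at $\xi_1$, yielding alternative (2). The main obstacle is precisely this second branch: one must produce two genuinely distinct preimage sequences accumulating at $\xi_1$ to invoke the inverse function theorem, and the strict norm inequality above is what makes this possible. The only non-elementary ingredient is Hopf--Rinow, which I would flag as the standing geodesic-completeness hypothesis.
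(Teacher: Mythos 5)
Your argument is correct and is the standard proof of this dichotomy: approximate the cut point from beyond by $p_k=\gamma(t_c+\epsilon_k)$, extract a convergent subsequence of initial velocities $\eta_k$ of minimizing geodesics to $p_k$, and split on whether the limit $\eta_\infty$ equals $\xi_1$ (giving alternative~1 if not, and a failure of local injectivity---hence, via the inverse function theorem, singularity of $\D \Exp_{X}$---if so). Note that the paper itself supplies no proof: the lemma is quoted from Petersen purely as a reference, and your construction essentially reproduces the argument of that cited source, with the one standing hypothesis (completeness via Hopf--Rinow, or simply compactness of $\Stnp$) correctly flagged.
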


Figure~\ref{fig:cut_locus_unit_circle} illustrates the concept of cut locus for the unit circle $ \cS^{1} = \mathrm{St}(2,1) $. For $ \cS^{1} $, the cut locus of a point $X$ is the single point $Y$ opposite to it (the antipodal point), $ C_{X} = \lbrace Y \rbrace $. We say that $Y$ is the cut locus of $X$ in $ \cS^{1} $.
Clearly, the geodesic emanating from $X$ in the direction of $\xi_{1}$ (or $\xi_{2}$) stops being distance-minimizing at a distance of $\pi$ from $X$, i.e., at $Y$.
In other words, the antipodal point $Y$ to $ X $ is the point at which the Riemannian exponential $\Exp$ stops being a diffeomorphism. Indeed, $ Y $ can be reached by both $ \xi_{1} $ and $ \xi_{2} $, both having norm $ \pi $. This illustrates point 1. of Lemma~\ref{lemma:petersen}.

\begin{figure}[htbp]
  \centering
  \includegraphics[width=0.80\textwidth]{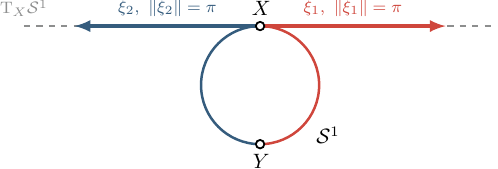}
  \caption{An illustration of the cut locus in the case of the unit circle $ \cS^{1} $.}\label{fig:cut_locus_unit_circle}
\end{figure}

More generally, from a numerical perspective, we recall that in step 7 of Algorithm~\ref{algo:simple-shooting} we need to solve the linear system
\[
   F_{x}^{(k)} = - J_{Z_1}^{x} \delta_{x}^{(k)}
\]
for $\delta_{x}^{(k)}$. In order to do this, the Jacobian $ J_{Z_1}^{x} $ needs to be a full-rank matrix. Here, we compute the (numerical) rank of this matrix, but we only focus on the special case of the unit sphere $ \cS^{n-1} $ in $ \R^{n} $, which is equivalent to $\mathrm{St}(n,1)$. We point out that, in this case, one has $ \| A \|_{2} = d(Y_{0},Y_{1}) $. Moreover, $ J_{Z_1}^{x} $ is of size $n$-by-$(n-1)$, so the Jacobian has full rank only if $ \rank \! \big( J_{Z_1}^{x} \big) = n-1 $, otherwise it is rank deficient.
Extensive numerical experiments for $p=1$ and varying $n$ suggest that there is a connection between the cut locus and $ \rank \! \big( J_{Z_1}^{x} \big) $. Figure~\ref{fig:rank_J_Z1_x_n20_p1_nrs_201} shows the values of $ \rank \! \big( J_{Z_1}^{x} \big) $ versus $d(Y_{0},Y_{1})$ for $p=1$ and $n=2,\dots,20$. Darker shades correspond to bigger values of $n$. We observe that for all the cases considered, the Jacobian is rank deficient only at $d(Y_{0},Y_{1}) = \pi$. This agrees with point 2. in Lemma~\ref{lemma:petersen}, and seems to be related to our result stated in Lemma~\ref{lemma:sing_val_J_expA} that $J_{\exp(A)}^{A}$ is singular when $ \| A \|_{2} $.

\begin{figure}[htbp]
  \centering
  \includegraphics[width=0.95\textwidth]{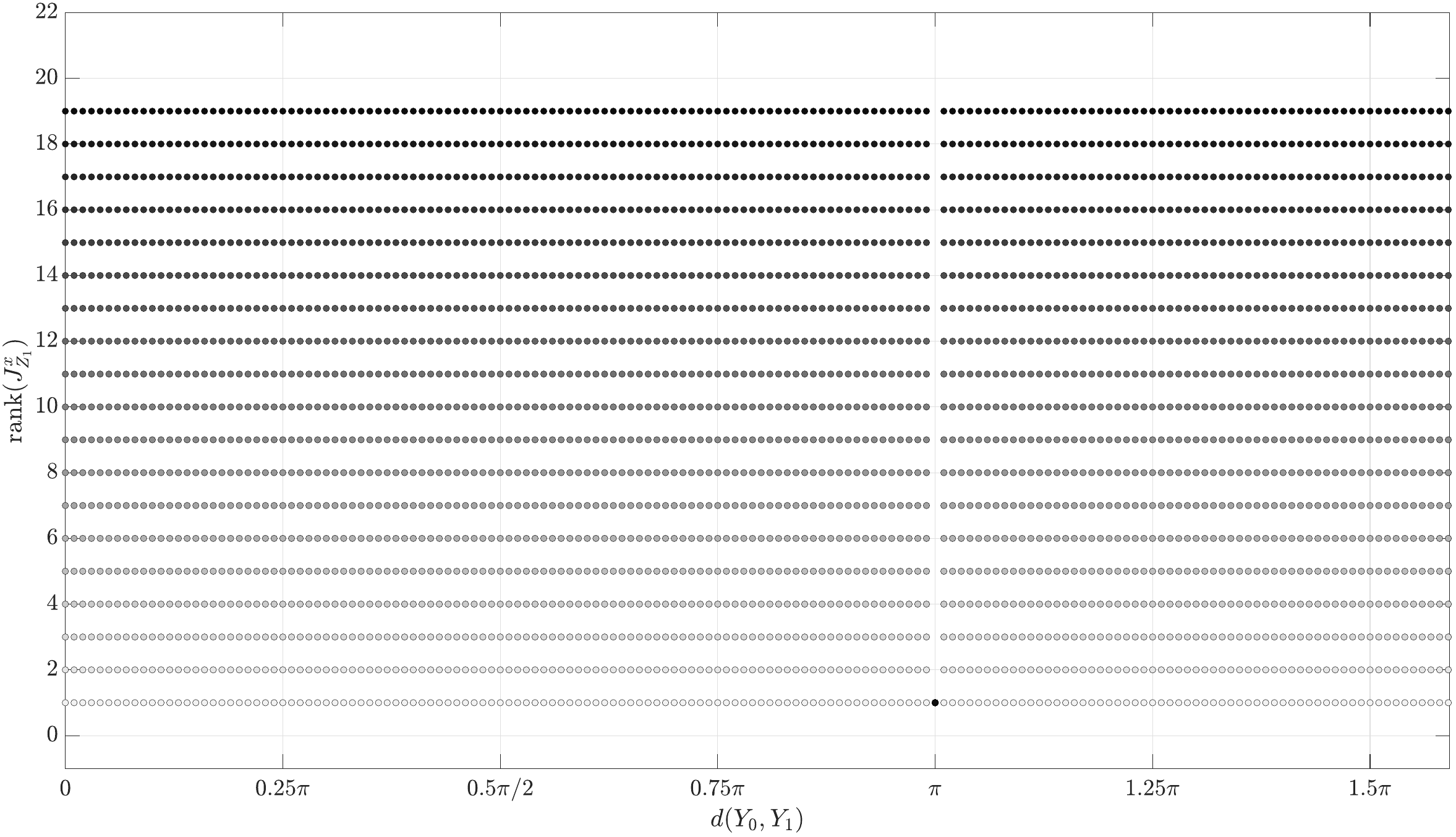}
  \caption{Values of $ \rank \! \big( J_{Z_1}^{x} \big) $ versus $ d(Y_{0},Y_{1}) $, for $ \cS^{n-1} $ with $ n = 2,\dots, 20 $.}\label{fig:rank_J_Z1_x_n20_p1_nrs_201}
\end{figure}

We also calculate the condition number of $ J_{Z_1}^{x} $ to gain more insight.
Figure~\ref{fig:cond_J_Z1_x_n20_p1_nrs_4001} shows the values of $ \cond \! \big( J_{Z_1}^{x} \big) $ versus $d(Y_{0},Y_{1})$ for $ \cS^{n-1} $ with $n=2,\dots,20$. Darker shades correspond to bigger values of $n$. The gray line is for $n=2$, while all the other lines for $n=3,\ldots,20$ overlap. In the case $n=2$, the Jacobian is always very well-conditioned. In all the other cases, we observe that $ J_{Z_1}^{x} $ becomes ill-conditioned as $d(Y_{0},Y_{1})$ approaches $\pi$, and the condition number eventually blows up at $d(Y_{0},Y_{1}) = \pi$.

\begin{figure}[htbp]
  \centering
  \includegraphics[width=0.95\textwidth]{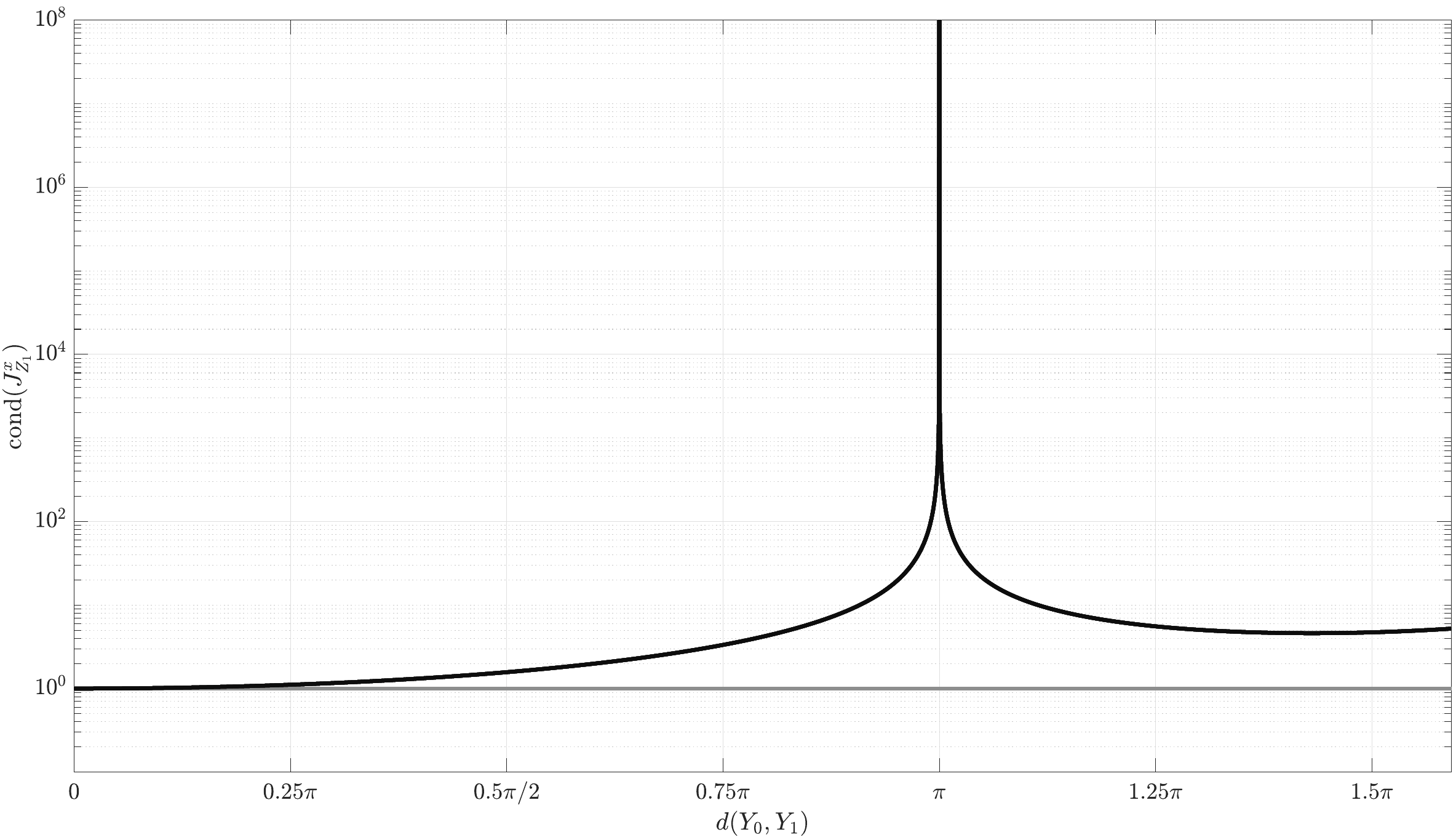}
  \caption{Values of $ \cond \! \big( J_{Z_1}^{x} \big) $ versus $ d(Y_{0},Y_{1}) $, for $ \cS^{n-1} $ with $ n = 2,\dots, 20 $. The gray line corresponds to $n=2$, while all the other lines overlap.}\label{fig:cond_J_Z1_x_n20_p1_nrs_4001}
\end{figure}

The cut locus is also closely related to the differentiability of the distance function. A standard result says that if $Y \in C_{X}$, then there exist at least two minimizing geodesics joining $X$ to $Y$ (see again Figure \ref{fig:cut_locus_unit_circle} for $\S^{1}$), and the distance function $d_{X} \colon \cM \to \R $ defined by $d_{X}(Y) \coloneqq d(X,Y)$ is not differentiable at $Y$ \protect{\cite[Chapter~III, Proposition~4.8(2)]{Sakai:1996}}.

Another important concept from Riemannian geometry, which relates to the cut locus and our numerical experiments, is the concept of injectivity radius.
The \emph{injectivity radius} $\inj_{X}(\cM)$ at $X$ is defined as the radius of the open ball in which the Riemannian exponential map $\Exp_{X}$ is a diffeomorphism from the tangent space $\mathrm{T}_{X}\cM$ to the manifold $\cM$ \protect{\cite[Chapter~III, Definition~4.12]{Sakai:1996}}. The \emph{global injectivity radius} is the infimum of the injectivity radius at $X$ over all points of the manifolds, i.e.,
\[
   \inj(\cM) \coloneqq \inf_{X \in \cM} \inj_{X}(\cM).
\]
The important property \protect{\cite[Chapter~III, Proposition~4.13(1)]{Sakai:1996}} tells us that the injectivity radius at $X$ is the distance from $X$ to the cut locus, i.e.,
\[
   \inj_{X}(\cM) = d(X,C_{X}).
\]
Hence for $\cS^{n-1}$, we have $ \inj_{X}(\cS^{n-1}) = \pi $ and $ \inj(\cS^{n-1}) = \pi $.
More generally, we have the following estimates on the injectivity radius. Let $\cM$ be a compact Riemannian manifold with everywhere strictly positive sectional curvature $K$, and let $C$ be an upper bound on $K$, i.e., $0<K \leq C$. The distance between a point and its cut locus is at least $\pi/\sqrt{C}$~\protect{\cite[Theorem~1(b)]{Klingenberg:1959}}, i.e.,
\[
   \inj_{X}(\cM) \geq \frac{\pi}{\sqrt{C}}.
\]
For the Stiefel manifold, an upper bound on its sectional curvature is given by $5/4$ \protect{\cite[p.~95]{Rentmeesters:2013}}, which implies the estimate
\[
   \inj(\Stnp) \geq \frac{2\sqrt{5}}{5} \, \pi \approx 0.8944 \, \pi.
\]
In the numerical experiments of section~\ref{sec:numerical_experiments}, we often use this value as a reference to (approximately) know whether a point is located outside or inside the injectivity radius of the Stiefel manifold $\Stnp$.

\section{Multiple shooting method}\label{sec:multiple_shooting}

A way to improve over single shooting is to consider a partition of the original interval into many smaller subintervals, which leads us to the \emph{multiple shooting method}.
This slicing permits to reduce the nonlinearity of the problem and improves numerical stability.
As in single shooting, Newton's method is also behind multiple shooting. The difference is that many initial value problems are solved separately on all multiple shooting intervals. The resulting system to be solved is larger, but the banded structure of the Jacobian matrix can be exploited to improve efficiency. A thorough description of the multiple shooting method can be found in \protect{\cite[\S 7.3.5]{Stoer:1991}}. Here, we will specialize the method in the context of the geodesic problem on the Stiefel manifold $ \Stnp $.

Let $ X $, $ Y $ be two points on a Stiefel manifold $ \Stnp $.
Consider a \emph{piecewise} (or \emph{broken}) \emph{geodesic} joining $ X $ to $ Y $, having $ m-1 $ geodesic segments. By this we mean that each curve segment is a geodesic on each subinterval.

Figure~\ref{fig:Broken_geodesic} provides an illustration of a broken geodesic.

\begin{figure}
   \centering
   \includegraphics[width=0.70\columnwidth]{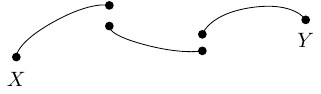}
   \caption{A schematic depiction of a broken geodesic.}
   \label{fig:Broken_geodesic}
\end{figure}

Let $\Sigma^{(k)}_{1}$ denote the point on the Stiefel manifold on the $k$th subinterval, and $\Sigma^{(k)}_{2}$ the tangent vector to $ \Stnp $ at $\Sigma^{(k)}_{1}$. Let $ \Sigma $ be the variable that collects the points and the tangent vectors for all the subintervals, namely,
\begin{equation*}
    \Sigma = \Big[\,
        \Sigma^{(1)}_{1} \quad
        \Sigma^{(1)}_{2} \quad
        \Sigma^{(2)}_{1} \quad
        \Sigma^{(2)}_{2} \quad
        \cdots \quad
        \Sigma^{(m-1)}_{1} \quad
        \Sigma^{(m-1)}_{2} \quad
        \Sigma^{(m)}_{1} \quad
        \Sigma^{(m)}_{2}\,
    \Big]\tr.
\end{equation*}
The compatibility conditions of the geodesic and its first derivative, plus the two boundary conditions denoted by $ r_{1} $ and $ r_{2} $, can be encoded into a system of nonlinear equations to be solved for $ \Sigma $
\begin{equation}\label{eq:MS_nonlinear_system}
    F(\Sigma ) = \begin{bmatrix}
        Z^{(1)}_{1} - \Sigma^{(2)}_{1} \\[4pt]
        Z^{(1)}_{2} - \Sigma^{(2)}_{2} \\[4pt]
        Z^{(2)}_{1} - \Sigma^{(3)}_{1} \\[4pt]
        Z^{(2)}_{2} - \Sigma^{(3)}_{2} \\[4pt]
        \vdots \\[4pt]
        Z^{(m-1)}_{1} - \Sigma^{(m)}_{1} \\[4pt]
        Z^{(m-1)}_{2} - \Sigma^{(m)}_{2} \\[4pt]
        r_1 = \Sigma^{(1)}_{1} - X \\[4pt]
        r_2 = \Sigma^{(m)}_{1} - Y
    \end{bmatrix}  = 0.
\end{equation}
Here, as in~\eqref{eq:ivp}, $ Z^{(k)}_{1} $ denotes the geodesic, whereas $ Z^{(k)}_{2} $ is the derivative of the geodesic with respect to $t$.
All the quantities $Z^{(k)}_{i}$ and $\Sigma^{(k)}_{i}$, $ k = 1, \ldots, m-1 $ and $ i = 1, 2 $, are to be understood as \emph{vectorized} quantities. 

Figure~\ref{fig:multiple_shooting} illustrates the variables (points and tangent vectors) involved in the multiple shooting on the Stiefel manifold.

\begin{figure}[htbp]
\centering
\includegraphics[width=0.65\columnwidth]{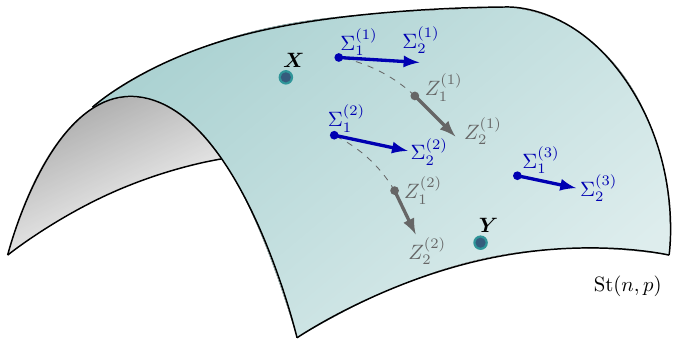}
\caption{Multiple shooting on the Stiefel manifold.}
\label{fig:multiple_shooting}
\end{figure}

Now consider the perturbed system
\begin{equation*}
    F( \Sigma + \delta \Sigma ) = 0, \qquad \text{with} \qquad
    \delta\Sigma = \Big[\,
        \delta\Sigma^{(1)} \quad
        \delta\Sigma^{(2)} \quad
        \cdots \quad
        \delta\Sigma^{(m)}\,
    \Big]\tr.
\end{equation*}
A linearization of the previous equation gives
\begin{equation}\label{eq:MS_linear_system}
    F(\Sigma ) + J_{F}^{\Sigma} \cdot \delta \Sigma = 0,
\end{equation}
where $J_{F}^{\Sigma} \in \R^{2mnp \times 2mnp}$ is a block Jacobian matrix. Each block $J_{F k\ell}^{\Sigma}\in \mathbb{R}^{np \times np}$ is given by
\begin{equation*}
    J_{F kk}^{\Sigma} = G^{(k)}, \qquad J_{F k,k+1}^{\Sigma} = -I_{2np}, \qquad k = 1, \ldots, m-1,
\end{equation*}
\begin{equation*}
    J_{F m,1}^{\Sigma} = C, \qquad J_{F m,m}^{\Sigma} = D, \qquad J_{F k\ell}^{\Sigma} = O_{2np} \quad \text{otherwise}.
\end{equation*}
Every $G^{(k)}$ is itself a Jacobian matrix for each subinterval defined as
\begin{equation}\label{eq:MS_jacobian_G_k}
    G^{(k)} = \begin{bmatrix}
        \frac{\partial Z^{(k)}_1}{\partial \Sigma^{(k)}_{1}}  & &  \frac{\partial Z^{(k)}_1}{\partial \Sigma^{(k)}_{2}} \\[12pt]
        \frac{\partial Z^{(k)}_2}{\partial \Sigma^{(k)}_{1}}  & &  \frac{\partial Z^{(k)}_2}{\partial \Sigma^{(k)}_{2}}
    \end{bmatrix} = \begin{bmatrix}
        J_{Z_1}^{\Sigma_{1}}  &  J_{Z_1}^{\Sigma_{2}} \\[10pt]
        J_{Z_2}^{\Sigma_{1}}  &  J_{Z_2}^{\Sigma_{2}}
    \end{bmatrix},
\end{equation}
where we omitted the superscript $^{(k)}$ in the last matrix for ease of notation. We refer the reader to appendix~\ref{app:multiple-shooting} for the explicit expressions of the Jacobian matrices appearing in \eqref{eq:MS_jacobian_G_k}.
The Jacobian matrices associated with the boundary conditions are given by
\begin{equation*}
    C = \begin{bmatrix}
        \frac{\partial r_1}{\partial \Sigma^{(1)}_1}  &   \frac{\partial r_1}{\partial \Sigma^{(1)}_2} \\[12pt]
        \frac{\partial r_2}{\partial \Sigma^{(1)}_1}  &   \frac{\partial r_2}{\partial \Sigma^{(1)}_2}
    \end{bmatrix} = \begin{bmatrix}
        I_{np}  &   O_{np} \\[4pt]
        O_{np}  &   O_{np}
    \end{bmatrix}, 
    \qquad
    D = \begin{bmatrix}
        \frac{\partial r_1}{\partial \Sigma^{(m)}_1}  &   \frac{\partial r_1}{\partial \Sigma^{(m)}_2} \\[12pt]
        \frac{\partial r_2}{\partial \Sigma^{(m)}_1}  &   \frac{\partial r_2}{\partial \Sigma^{(m)}_2}
    \end{bmatrix} = \begin{bmatrix}
        O_{np}  &   O_{np} \\[4pt]
        I_{np}  &   O_{np} 
    \end{bmatrix}.
\end{equation*}

\subsection{The initial guess}

To initialize the multiple shooting algorithm, we use the leapfrog method of Noakes~\cite{Noakes:1998}. The main idea behind this algorithm is to exploit the success of single shooting by subdividing the original problem into several subproblems, introducing intermediate points between $X$ and $Y$, for which the endpoint geodesic problem can be solved by the single shooting method.
The algorithm then iteratively updates a piecewise geodesic to obtain a globally smooth geodesic between $X$ and $Y$. The leapfrog algorithm resembles the multiple shooting method because they both partition the original interval into smaller subintervals.

It is challenging to say something about the global convergence of Newton's method. For local convergence, we have the result of the \emph{Newton--Kantorovich theorem}. In practical applications, a sufficient number of iterations in the leapfrog algorithm produces an iterate $ \Sigma^{(k)} $, which satisfies the conditions of the Newton--Kantorovich theorem. For this reason, we use leapfrog to initialize the multiple shooting method. We name the resulting algorithm LFMS and will illustrate it in more detail through the numerical experiments of section~\ref{sec:LFMS_num_experiments}.

\section{Numerical experiments}\label{sec:numerical_experiments}

In this section, we present some simple numerical experiments about the single shooting and the multiple shooting algorithms, and we report on their convergence behavior.
The algorithms were implemented in MATLAB and are publicly available at~\url{https://github.com/MarcoSutti/LFMS_Stiefel}. We conducted our experiments on a laptop Lenovo ThinkPad T460s with Ubuntu 23.04 LTS and MATLAB R2022a installed, with Intel Core i7-6600 CPU, 20GB RAM, and Mesa Intel HD Graphics 520.

\subsection{Numerical experiments for single shooting}\label{sec:SS_num_example}

We use the numerical experiments to test the convergence of the single shooting method and relate it to the analysis above. We consider the Stiefel manifold $ \mathrm{St}(15,p) $ with $p$, ranging from 1 to 15. As for the endpoints, we fix one point $ X = [ I_{p} \ \ O_{p \times (15-p)} ]\tr $, while the other point $ Y $ is placed at a prescribed distance $d(X,Y)$ from $ X $. We vary the distance $d(X,Y)$ from $ 0.85\,\pi $ to $ \pi $ (see Table~\ref{tab:SS_convergence_n15}). By using single shooting, we want to recover this distance. We expect that if $ d(X,Y) $ is less than the lower bound on the injectivity radius, which for the Stiefel manifold is lower bounded by $ 0.8944 \, \pi $ (as explained in section~\ref{sec:connection}), single shooting will converge. However, as $d(X,Y)$ increases, the algorithm might diverge, especially when $d(X,Y)$ is greater than the value $ 0.8944 \, \pi $.

Table~\ref{tab:SS_convergence_n15} reports on the convergence behavior of the single shooting method for the endpoint geodesic problem on  $\mathrm{St}(15,p)$ as a function of the distance $d(X,Y)$. The checkmark signifies that the algorithm converges, while the cross indicates that the algorithm diverges. We consider that single shooting converged when the norm of the residual $ \| \delta\xi^{(k)} \|_{2} $, as it appears in Algorithm~\ref{algo:simple-shooting}, is smaller than $ 10^{-10} $. We observe that in some cases, for $p=2$, 3, 4, and 5, for sufficiently large $ d(X,Y) $, the single shooting method does not converge.

\begin{table}[htbp]
   \caption{Single shooting on the Stiefel manifold $\mathrm{St}(15,p)$, for $p=1,\dots,15$.}
   \label{tab:SS_convergence_n15}
   \begin{center}
      \begin{tabular}{c|c|c|c|c|c|c|c}  
         \toprule
           $ n = 15 $ &  \multicolumn{7}{c}{$ d(X,Y) $}  \\
         \midrule
          $ p $   &  $ 0.85\,\pi $ &  $ 0.875\,\pi $  & $ 0.90\,\pi $  &  $ 0.925\,\pi $  & $ 0.95\,\pi $  &  $ 0.975\,\pi $ &    $ \pi $  \\
         \midrule
            1     &   \checkmark   &  \checkmark      &  \checkmark    &   \checkmark     &   \checkmark   &   \checkmark    &   \checkmark \\
            2     &   \checkmark   &  \checkmark      &  \xmark        &   \xmark         &   \xmark       &   \xmark        &   \xmark     \\
            3     &   \checkmark   &  \checkmark      &  \checkmark    &   \checkmark     &   \xmark       &   \xmark        &   \xmark     \\
            4     &   \checkmark   &  \checkmark      &  \checkmark    &   \checkmark     &   \xmark       &   \xmark        &   \xmark     \\
            5     &   \checkmark   &  \checkmark      &  \checkmark    &   \checkmark     &   \checkmark   &   \xmark        &   \xmark     \\
            6     &   \checkmark   &  \checkmark      &  \checkmark    &   \checkmark     &   \checkmark   &   \checkmark    &   \checkmark \\
            7     &   \checkmark   &  \checkmark      &  \checkmark    &   \checkmark     &   \checkmark   &   \checkmark    &   \checkmark \\
            8     &   \checkmark   &  \checkmark      &  \checkmark    &   \checkmark     &   \checkmark   &   \checkmark    &   \checkmark \\
            9     &   \checkmark   &  \checkmark      &  \checkmark    &   \checkmark     &   \checkmark   &   \checkmark    &   \checkmark \\
           10     &   \checkmark   &  \checkmark      &  \checkmark    &   \checkmark     &   \checkmark   &   \checkmark    &   \checkmark \\
           11     &   \checkmark   &  \checkmark      &  \checkmark    &   \checkmark     &   \checkmark   &   \checkmark    &   \checkmark \\
           12     &   \checkmark   &  \checkmark      &  \checkmark    &   \checkmark     &   \checkmark   &   \checkmark    &   \checkmark \\
           13     &   \checkmark   &  \checkmark      &  \checkmark    &   \checkmark     &   \checkmark   &   \checkmark    &   \checkmark \\
           14     &   \checkmark   &  \checkmark      &  \checkmark    &   \checkmark     &   \checkmark   &   \checkmark    &   \checkmark \\
           15     &   \checkmark   &  \checkmark      &  \checkmark    &   \checkmark     &   \checkmark   &   \checkmark    &   \checkmark \\
         \bottomrule
      \end{tabular} 
   \end{center}
\end{table}

For the particular case of the unit sphere, we observe from our numerical experiments that when we want to recover a distance that is bigger than $\pi$, the single shooting method will not recover the original tangent vector, but it will compute the tangent vector corresponding to the distance-minimizing geodesic. Indeed, the uniqueness of the geodesic connecting two points is guaranteed only inside the injectivity radius. For instance, for $\mathrm{St}(15,1)$, with $d(X,Y)= 1.10 \, \pi \approx 3.46 $, the single shooting method computes a tangent vector whose length (canonical norm) is $ \| \xi \|_{\mathrm{c}} \approx 2.83 $.

This happens also in other instances of the Stiefel manifold, not only for the unit sphere. For example, for $\mathrm{St}(15,2)$, with $d(X,Y)= 1.05 \, \pi \approx 3.30 $, the single shooting method computes a tangent vector whose length (canonical norm) is $ \| \xi \|_{\mathrm{c}} \approx 2.98 $.

To plot the convergence behavior of the single shooting algorithm, let us consider the Stiefel manifold $ \mathrm{St}(15,4) $. We fix one point $ X = [ I_{4} \ \ O_{4 \times 11} ]\tr $, while the other point $ Y $ is placed at a distance $ d(X,Y) = 0.75\,\pi $ from $ X $. In this way, the points $X$ and $Y$ lie at a distance that is for sure than the injectivity radius of $ \Stnp $, which is lower bounded by $ 0.8944 \, \pi $. This guarantees the existence and uniqueness of the minimizing geodesic between $X$ and $Y$. As marked in Table~\ref{tab:SS_convergence_n15}, we already know that single shooting converges in this case.

Figure~\ref{fig:Convg_ss_15_4} reports on the convergence behavior of the norm of the residual $ \| \delta\xi^{(k)} \|_{2} $. In the previous experiments, we stopped the algorithm when a tolerance $ 10^{-10} $ is reached, but here, to show the plateau at the double precision machine epsilon $ \epsmach \approx 2.22 \times 10^{-16} $, we run it for a few more iterations.
We observe that single shooting converges in five iterations with a quadratic convergence behavior.

\begin{figure}[htbp]
  \centering
  \includegraphics[width=0.55\textwidth]{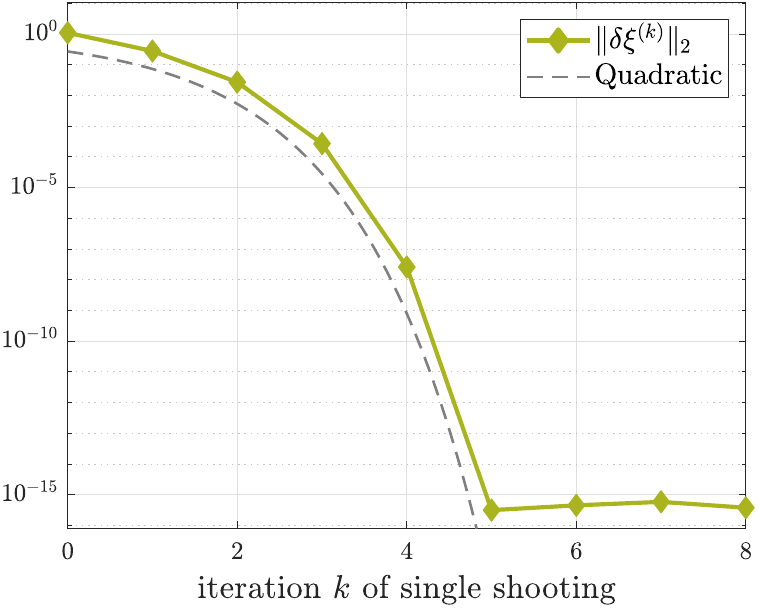}
  \caption{Convergence of the norm of the residual $ \| \delta\xi^{(k)} \|_{2} $ for single shooting on $ \mathrm{St}(15,4) $.}\label{fig:Convg_ss_15_4}
  \end{figure}

Table~\ref{tab:time_cost_SS} reports on the number of iterations, computational time, and monitored quantities for $\mathrm{St}(15,p)$, for $p=1,\ldots, 15 $ for $ d(X,Y) = 0.75\, \pi $. The stopping tolerance was set to $ 10^{-13} $. For each row, the same experiment is repeated and the computational time is averaged over 20 runs.

\begin{table}[htbp]
   \caption{Numerical results for the single shooting method on $\mathrm{St}(15,p)$, for $p=1,\ldots, 15 $ for a prescribed $ d(X,Y) = 0.75\,\pi $. The time is averaged over 20 runs.}
   \label{tab:time_cost_SS}
   \begin{center}
      \begin{tabular}{c|c|c|c|c}
         \toprule
                        $p$    & Iterations &    Time    & $ \| \delta\xi^{(\text{end})} \|_{2} $ & $ \| F^{(\text{end})} \|_{2} $  \\
         \midrule
                         1     &         4  &   0.00024  &                        0      &         0                      \\
                         2     &         6  &   0.00236  &   $ 1.5344 \times 10^{-15} $  &  $ 8.9422 \times 10^{-16} $    \\
                         3     &         6  &   0.00401  &   $ 8.0197 \times 10^{-16} $  &  $ 5.3533 \times 10^{-16} $    \\
                         4     &         6  &   0.00489  &   $ 7.1056 \times 10^{-16} $  &  $ 7.6902 \times 10^{-16} $    \\
                         5     &         6  &   0.00817  &   $ 5.6917 \times 10^{-16} $  &  $ 6.8165 \times 10^{-16} $    \\
                         6     &         6  &   0.01354  &   $ 1.2256 \times 10^{-15} $  &  $ 9.0419 \times 10^{-16} $    \\
                         7     &         6  &   0.02143  &   $ 1.1981 \times 10^{-15} $  &  $ 1.1402 \times 10^{-15} $    \\
                         8     &         6  &   0.02790  &   $ 7.6053 \times 10^{-16} $  &  $ 8.9126 \times 10^{-16} $    \\
                         9     &         6  &   0.03054  &   $ 5.8892 \times 10^{-16} $  &  $ 7.4760 \times 10^{-16} $    \\
                        10     &         6  &   0.03128  &   $ 6.9054 \times 10^{-16} $  &  $ 8.8569 \times 10^{-16} $    \\
                        11     &         6  &   0.03344  &   $ 6.8548 \times 10^{-16} $  &  $ 8.5836 \times 10^{-16} $    \\
                        12     &         5  &   0.02837  &   $ 5.0750 \times 10^{-14} $  &  $ 5.8053 \times 10^{-14} $    \\
                        13     &         5  &   0.03090  &   $ 7.7702 \times 10^{-16} $  &  $ 1.0236 \times 10^{-15} $    \\
                        14     &         5  &   0.03148  &   $ 8.2079 \times 10^{-16} $  &  $ 1.1232 \times 10^{-15} $    \\
                        15     &         4  &   0.02544  &   $ 4.9247 \times 10^{-16} $  &  $ 9.2350 \times 10^{-15} $    \\
         \bottomrule
      \end{tabular}
   \end{center}
\end{table}

From Table~\ref{tab:time_cost_SS} it appears that the single shooting method is very efficient for small values of $p$. The computational time is always smaller than 0.04 seconds, and it converges in at most 6 iterations. In the special cases of the sphere and the orthogonal group, it converges in 4 iterations. Nonetheless, the single shooting method scales very badly with respect to $p$, as it becomes very expensive as $p$ grows, as showed in the following Table~\ref{tab:time_cost_SS_n1000}.

\begin{table}[htbp]
   \caption{Numerical results for the single shooting method on $\mathrm{St}(1000,p)$, for several values of $p$, for a prescribed $ d(X,Y) = 0.75\,\pi $.}
   \label{tab:time_cost_SS_n1000}
   \begin{center}
      \begin{tabular}{c|c|c|c|c}
         \toprule
                        $p$      & Iterations &    Time    & $ \| \delta\xi^{(\text{end})} \|_{2} $ & $ \| F^{(\text{end})} \|_{2} $  \\
         \midrule
                         10      &         5  &   0.07859  &   $ 8.8690 \times 10^{-16} $  &  $ 7.9265 \times 10^{-16} $    \\
                         20      &         5  &   3.06617  &   $ 7.5946 \times 10^{-16} $  &  $ 8.5160 \times 10^{-16} $    \\
                         30      &         5  &   33.6876  &   $ 7.8204 \times 10^{-16} $  &  $ 1.0280 \times 10^{-15} $    \\
                         40      &         5  &   182.690  &   $ 8.9332 \times 10^{-16} $  &  $ 1.0394 \times 10^{-15} $    \\
                         50      &         5  &   713.083  &   $ 1.0277 \times 10^{-15} $  &  $ 1.1707 \times 10^{-15} $    \\
                         60      &         5  &   2145.06  &   $ 1.2720 \times 10^{-15} $  &  $ 1.3564 \times 10^{-15} $    \\
         \bottomrule
      \end{tabular}
   \end{center}
\end{table}

Indeed, the bottleneck of this algorithm is the calculation of the Jacobian $ J_{\exp(A)}^{A} $, which is a dense matrix of size $n^{2} \times n^{2}$ (or $ 4p^{2} \times 4p^{2} $ if $p<n/2$, since in that case we use the smaller formulation described in section~\ref{sec:baby_problem}). 
This is not a big issue in the practical applications of section~\ref{sec:applications} because $p$ stays moderately small (the biggest value is $p=20$ in section~\ref{sec:interp_on_stiefel}).

However, this can be seen as the price to pay for computing an explicit expression of the Jacobian matrix $ J_{\exp(A)}^{A} $ and wanting a quadratic convergence. Yet it might be possible in some cases to approximate this Jacobian $ J_{\exp(A)}^{A} $ and obtain a computationally cheaper algorithm. For example, if $ \| A \|_{2} $ is small enough, we may approximate $ J_{\exp(A)}^{A} $ it by an identity matrix. Of course, we will lose in terms of order of convergence since it will not be quadratic anymore, but we will save time and memory storage. This option will be explored in more detail in a future work.

\subsection{Leapfrog and multiple shooting (LFMS)}\label{sec:LFMS_num_experiments}

In this section, we provide an example of using the leapfrog algorithm of Noakes~\protect{\cite{Noakes:1998}} in combination with our multiple shooting method. The resulting algorithm is called LFMS. The combination of these two algorithms is also a novelty.

From an algorithmic point of view, we propose the following scheme, summarized in the flowchart of Figure~\ref{fig:Flowchart_Stiefel_Log}:
\begin{itemize}
\item Given two points $X$ and $Y$, for which we want to compute $d(X,Y)$, the first attempt to solve the endpoint geodesic problem is always done with single shooting.
\item If single shooting converges\footnote{In the experiments, we choose 10 as the maximum number of single shooting iterations. We consider that the single shooting fails when this number is exceeded.}, then the problem is solved, and we are done. If single shooting does not converge, we start with leapfrog with two subintervals, i.e., with $ m=3 $ points, i.e., the smallest partition possible.
\item If leapfrog with two subintervals does not work, i.e., if the single shooting behind leapfrog does not work, we keep increasing the number of subintervals until it converges. The single shooting behind leapfrog has to converge on each subinterval.
\item When leapfrog works, we perform a few iterations and then use the iterate found by leapfrog as an initial guess for multiple shooting. The problem is then solved with multiple shooting, which converges quadratically to the solution.
\end{itemize}

\begin{figure}[htbp]
\centering
\includegraphics[width=0.5\textwidth]{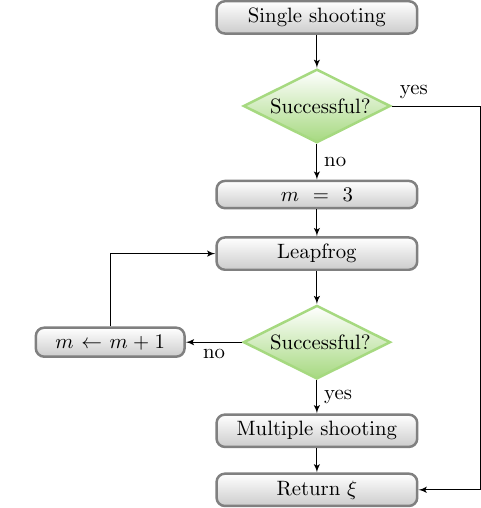}
\caption{Flowchart of the Stiefel Log algorithm.}\label{fig:Flowchart_Stiefel_Log}
\end{figure}

Consider the Stiefel manifold $ \mathrm{St}(12,3) $ as a concrete example. We fix one point $ X = [ I_{3} \ \ O_{3 \times 9} ]\tr $, while the other point $ Y $ is placed at a distance $ L^{*} = 0.95\,\pi $ from $ X $. By using our numerical algorithms, we want to recover this distance. This choice is made to have two points that are far enough from each other; i.e., this problem is such that it cannot be solved by using single shooting alone. Recall from the discussion in section~\ref{sec:connection} that a lower bound on the global injectivity radius of $ \Stnp $ is given by $ 0.8944 \, \pi $, so it makes sense to consider a distance $ L^{*} > 0.8944\,\pi $ to test the LFMS algorithm.
As number of points, we choose $ m = 4 $, i.e., the path between $ X $ and $ Y $ is partitioned into three subintervals.

To monitor the convergence behavior, two quantities have been considered:
\begin{itemize}
   \item $ | L_{k} - L^{*} | $, where $ L_{k} $ is the length of the piecewise geodesic at iteration $ k $.
   \item $ \| F(\Sigma_{k}) \|_{2} $, where $ F(\Sigma_{k}) $ is the nonlinear function of multiple shooting, as defined in~\eqref{eq:MS_nonlinear_system}.
\end{itemize}

Panel (a) of Figure~\ref{fig:convergence_LFMS} reports on the convergence behavior of leapfrog. Leapfrog is stopped when $ \| F(\Sigma_{k}) \|_{2} $ reaches the threshold value of $ 10^{-3} $ (this happens at the 28th iteration). We estimate that at this threshold, the iterates will fall in the so-called basin of attraction of Newton's method so that multiple shooting will succeed when started with the iterate generated by leapfrog. It is evident the linear convergence behavior of leapfrog.

Panel (b) of Figure~\ref{fig:convergence_LFMS} reports on the convergence behavior of multiple shooting. Multiple shooting is started from where the leapfrog algorithm left the job; one can check this by comparing the values of the monitored quantities at the last iteration in the leapfrog algorithm with those at the initial iteration of multiple shooting. We observe the quadratic convergence behavior and the onset of the plateau at around machine precision $ \epsmach \approx 10^{-16} $.

\begin{figure}[htbp]
    \centering
    \begin{minipage}{0.485\textwidth}
        \centering
        \includegraphics[width=\textwidth]{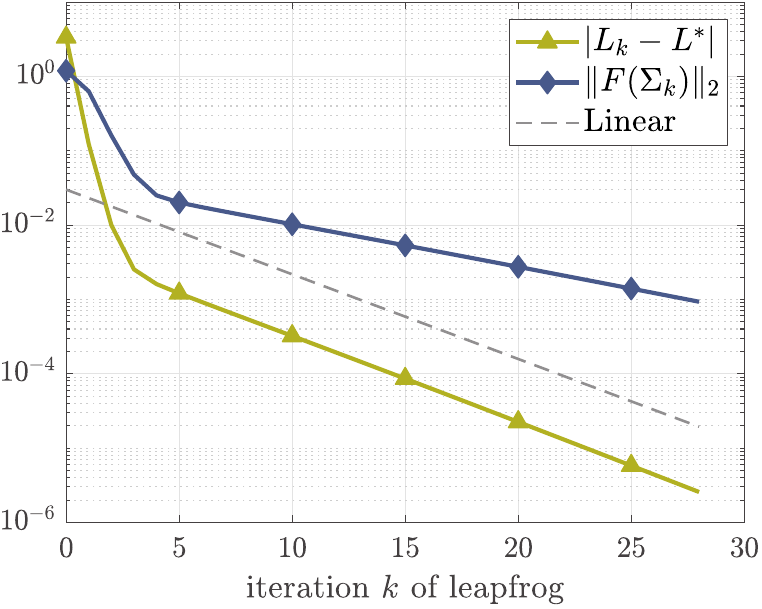}
        {\scriptsize (a)}
    \end{minipage}\hfill
    \begin{minipage}{0.502\textwidth}
        \centering
        \includegraphics[width=\textwidth]{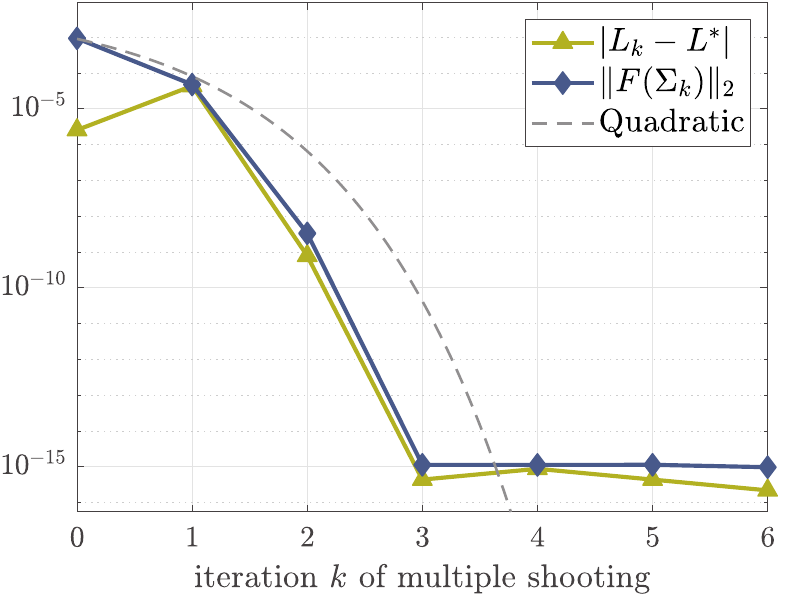}
        {\scriptsize (b)}
    \end{minipage}
    \caption{Convergence of LFMS for $ \mathrm{St}(12,3) $, with $ m = 4 $. Panel (a): convergence of leapfrog. Panel (b): convergence of multiple shooting.}\label{fig:convergence_LFMS}
\end{figure}

\section{Applications}\label{sec:applications}

In this section, we present three applications in which the calculation of the distance between points on the Stiefel manifold is involved.
We first present an application that uses means on a Riemannian manifold $ \cM $ to average probability density functions. Then, we see an application in the context of the analysis of planar shapes. Finally, we outline an application in the framework of model order reduction.

\subsection{Riemannian center of mass on the space of univariate probability density functions}\label{sec:RCM_PDFs}

We present an application that uses means on a Riemannian manifold $ \cM $. 
Given $ N $ points $ q_{i} \in \cM $, their \emph{Riemannian center of mass} is defined by the optimization problem
\begin{equation*}
    \mu = \argmin_{p\in\cM} \frac{1}{2N} \sum_{i=1}^{N} d(p,q_{i})^{2},
\end{equation*}
where $d(p,q_{i})$ is the Riemannian distance between two points on $\cM$. Seminal works on the center of mass in the context of Riemannian geometry are due to Cartan in the 1920s, Calabi in the 1950s~\protect{\cite{Afsari:2011}}, and Grove and Karcher~\protect{\cite{Grove:1973}}.

On manifolds of positive curvature, the Riemannian center of mass is generally not unique. However, if the data points $q_{i}$ are close enough to each other, then their Riemannian center of mass is unique. For an excellent study of the uniqueness of the Riemannian center of mass, we refer the reader to~\protect{\cite{Afsari:2011}}.

Here, we use the Riemannian center of mass to calculate an average probability density function (PDF). This is a simple problem since we consider the unit $ n $-sphere $ S^{n} $, a special case of the Stiefel manifold for which we also have an explicit formula for the Riemannian logarithm. However, it remains interesting because it allows us to test our algorithm and for a visualization of the outcome. It also prepares us for the application presented in the next section~\ref{sec:application_preshape_space}. We first introduce some essential notions.

Let $ \cP $ be the space of univariate PDFs on the unit interval $[0,1]$ 
\[
   \cP = \Big\{ g \colon [0,1] \to \R_{\geqslant 0} \colon \int_{0}^{1}g(x)\dx = 1 \Big\}.
\]
By introducing the \emph{half-density representation} of the elements of $ \cP $
\[
   q(t) = \sqrt{g(t)},
\]
the set $ \cP $ can be identified with the space
\[
   \cQ = \big\lbrace q \colon [ 0, 1 ] \to \R_{\geqslant 0} \colon \| q \| = 1 \big\rbrace.
\]
This identification allows us to attach a spherical structure to $ \cP $, and the unit $n$-sphere $S^{n} = \{ x \in \R^{n+1}: \|x \|=1 \}$ can be used to approximate the space of univariate PDFs on the unit interval $ [0,1] $. We refer the reader to \protect{\cite[\S 7.5.3]{Srivastava:2016}} for further details.

Given a certain number of PDFs, one might be interested in computing summary statistics of all of them, which can be given by their Riemannian center of mass.
As a concrete example, we consider three PDFs, sampled at 100 points. This discretization makes them belong to $\mathrm{St}(100,1)$, i.e., the unit sphere $S^{99}$.
Figure~\ref{fig:Riemannian_CoM_PDFs} shows the three PDFs on the left panel, and their Riemannian center of mass on the right panel. The resulting Riemannian center of mass is a PDF that summarizes the three original PDFs' features (e.g., peak locations, spread around the peaks).

\begin{figure}[htbp]
   \centering
   \includegraphics[width=0.90\columnwidth]{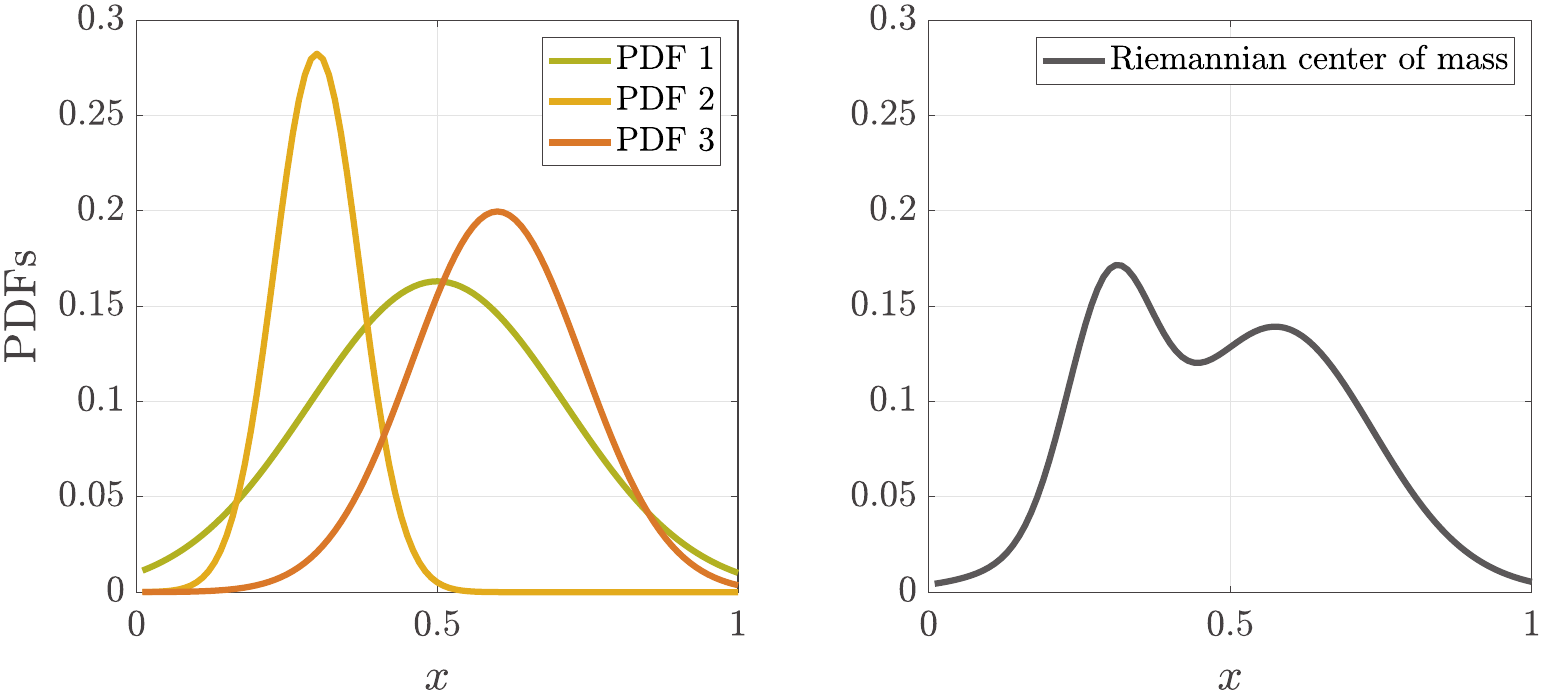}
       \begin{minipage}{0.49\textwidth}
          \centering
          \hspace{1cm} (a)
    \end{minipage}
    \begin{minipage}{0.49\textwidth}
       \centering
          (b)
    \end{minipage}
   \caption{Calculation of the Riemannian center of mass of three PDFs. Panel (a): The three PDFs. Panel (b): their Riemannian center of mass.}\label{fig:Riemannian_CoM_PDFs}
\end{figure}

\subsection{Analysis of planar shapes in the pre-shape space}\label{sec:application_preshape_space}

In this section, we analyze planar shapes in the so-called pre-shape space. In some practical applications, like imaging, the observed shapes are affected by transformations that are more complicated than similarity transformations (translation, rotation, and global scaling). For instance, when the image plane of a camera is not parallel to the plane containing the shape (distortion), or when a camera is used to image the same scene from different viewing angles. 
In these cases, one usually goes beyond the similarity transformations to define shape equivalences.

The application presented in this section was featured in~\protect{\cite[\S 11.3]{Srivastava:2016}}. Similar numerical experiments also appeared in~\protect{\cite{Younes:2008}}, where the space of planar closed curves was identified with a Grassmann manifold, using the complex square-root representation. In~\protect{\cite{Bryner:2017}}, it was done on the Stiefel manifold endowed with the embedded metric. In contrast, here we use the Stiefel manifold endowed with the canonical metric. 

We first give some mathematical definitions and then turn to the numerical experiments.
Let $ \Rnp $ be the space of point sets of size $ n $ in $ \R^{p} $, i.e., $ X \in \left[ x_{1}, \dots, x_{n} \right]\tr \in \Rnp $, and let the affine group $ G_{a} = GL(p) \ltimes \R^{p} $, $ GL(p) $ denotes the space of invertible $p$-by-$p$ matrices.
The action of the affine group $ G_{a} $ on $ \Rnp $ defines the orbits
\[
   \left[ X \right] = \left\lbrace XA + B \mid A \in GL(p), \, B = \mathbf{1}\diag(b) \right\rbrace,
\]
where $ b \in \R^{p} $, and $ \mathbf{1} $ denotes a matrix of ones of size $n$-by-$p$. We define the centroid and covariance matrix as
\[
   C_{X} \coloneqq \frac{1}{n} \sum_{i=1}^{n} x_{i}, \qquad \Sigma_{X} \coloneqq \left( X - \mathbf{1} \, \diag(C_{X}) \right)\tr \left( X - \mathbf{1} \, \diag(C_{X}) \right).
\]
For any full-rank matrix $ X $, there exists an affine-standardized point set $ X_{0} \in \left[ X \right] $ that satisfies both $ C_{X} = 0 $ (centroid at the origin) and $ \Sigma_{X} = I $ (covariance condition). That is, $X_{0}$ is an element of the Stiefel manifold $ \Stnp $. Affine standardization is the task of finding a canonical element in the affine orbit of any given curve. Roughly speaking, one can think of this operation as a projection which is typically achieved through an iterative algorithm. Since it is not our aim here to describe how to perform the affine standardization of curves, we refer the interested reader to~\protect{\cite[\S 11.3.1]{Srivastava:2016}} for more details. 

For any two affine-standardized point sets $ X_{0}^{(1)} $, $ X_{0}^{(2)} \in \left[ X \right] $, we have the equivalence relationship $ X_{0}^{(1)} \sim X_{0}^{(2)} $ up to an orthogonal transformation in $ \Op $. 
The space of all affine-standardized point sets (called affine-invariant pre-shape space)
\[
   \cA_{n,p} = \left\lbrace X \in \Rnp \mid C_{X} = 0, \, \Sigma_{X} = I \right\rbrace.
\]
The affine-invariant shape space is the quotient $ \cA_{n,p}/\Op $. We emphasize that the shape space itself does not have a manifold structure, even though the pre-shape space is a manifold \protect{\cite[\S 9.3.2]{Srivastava:2016}}.
Hence, an analysis on $ \Stnp $ alone is equivalent to an analysis on the pre-shape space $ \cA_{n,p} $, so it is \emph{not} an affine-invariant shape analysis. The references \protect{\cite{Kendall:1984}} and \protect{\cite[\S 6]{Kendall:1999}} provide more information to the interested reader.

Table~\ref{tab:geodesic_MPEG-7} illustrates geodesics on the Stiefel manifold using six shapes from the MPEG-7 dataset. For each shape group chosen, we picked two shapes as endpoints, computed the Riemannian logarithm with the shooting method, and then calculated eight equidistant intermediate points (shapes) located on geodesic joining those given endpoints. Each row in Table~\ref{tab:geodesic_MPEG-7} represents a geodesic on $ \mathrm{St}(n,2) $ ($n$ is different for every row), running from the left to the right. The two red shapes at the extrema of each row are the endpoints of a geodesic. The right column of Table~\ref{tab:geodesic_MPEG-7} reports the distances between the endpoints for each geodesic.

\begin{table}[htbp]
   \begin{center}
      \begin{tabular}{c|c}
         \toprule
         Geodesics on $ \mathrm{St}(n,2) $  &  Distance \\
         \midrule
         \includegraphics[align=c,width=0.85\textwidth]{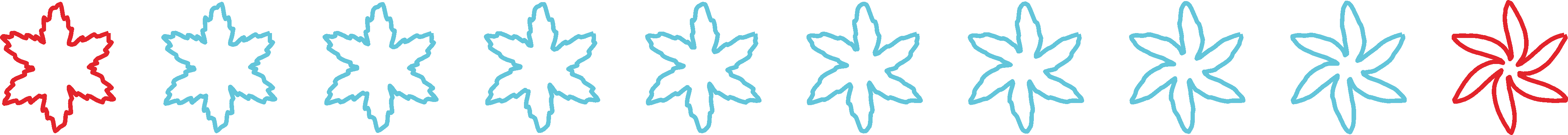}    &  0.28 \\
         \midrule
         \includegraphics[align=c,width=0.85\textwidth]{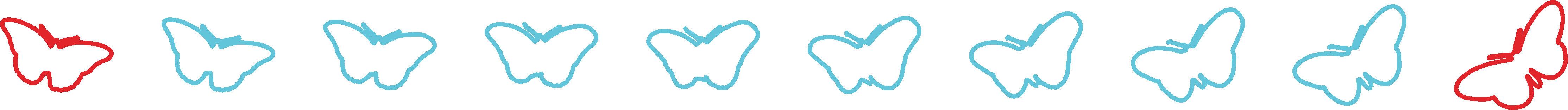}  &  1.23 \\
         \midrule
         \includegraphics[align=c,width=0.85\textwidth]{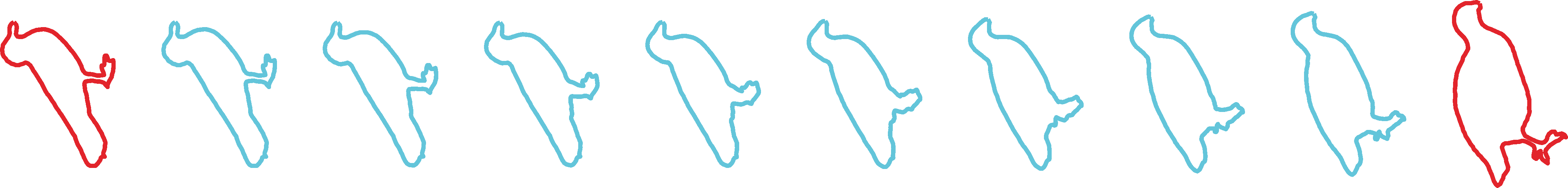}       &  0.55 \\
         \midrule
         \includegraphics[align=c,width=0.85\textwidth]{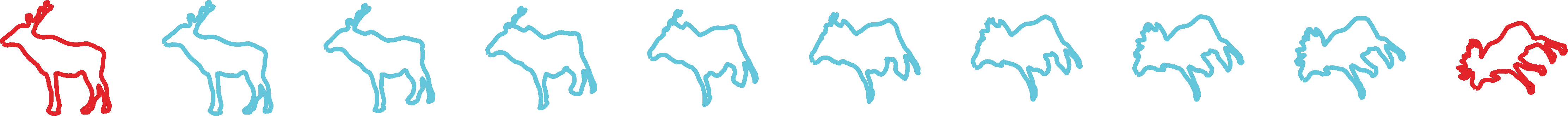}       &  0.78 \\
         \midrule
         \includegraphics[align=c,width=0.85\textwidth]{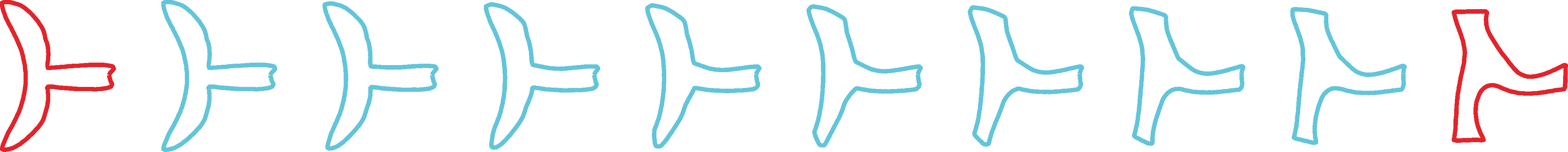}    &  0.31 \\
         \midrule
         \includegraphics[align=c,width=0.85\textwidth]{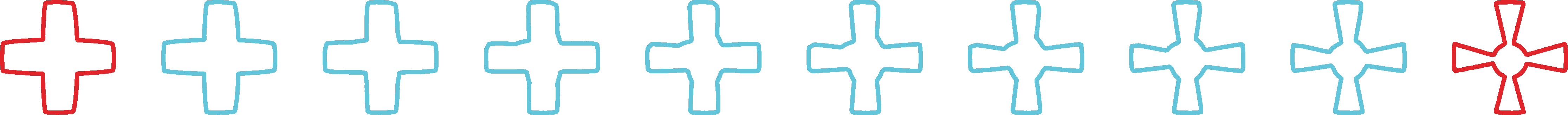}    &  0.21 \\
         \bottomrule
      \end{tabular}
   \end{center}
   \caption{Geodesics connecting MPEG-7 shapes on $ \mathrm{St}(n,2) $.}\label{tab:geodesic_MPEG-7}
\end{table}

Figure~\ref{fig:device1} illustrates an example of computing summary statistics of a group of shapes in the MPEG-7 dataset. For a given set, in this case, we choose ``device1'', whose shapes all share a certain degree of similarity, we compute its Riemannian center of mass, as defined in section~\ref{sec:RCM_PDFs}. 
Panel (a) shows all the twenty shapes in the group ``device1'', while panel (b) shows the resulting center of mass, which summarizes the features of the shapes in this group (the number of spikes, their orientation, and their thickness).

The values above each shape in panel (a) of Figure~\ref{fig:device1} are the distances of each shape from the Riemannian center of mass.
The shape closest to the Riemannian center of mass is the one in the first row, third column. We observe that the shape furthest away from the Riemannian mean is the one on the third row, third column. This might be reasonably due to the extreme thinness and shape of the spikes of this star. Conversely, the shape closest to the center of mass is the one in the second row, first column.
 
\begin{figure}[htbp]
    \centering
    \begin{minipage}{0.725\textwidth}
        \centering
        \includegraphics[width=\textwidth]{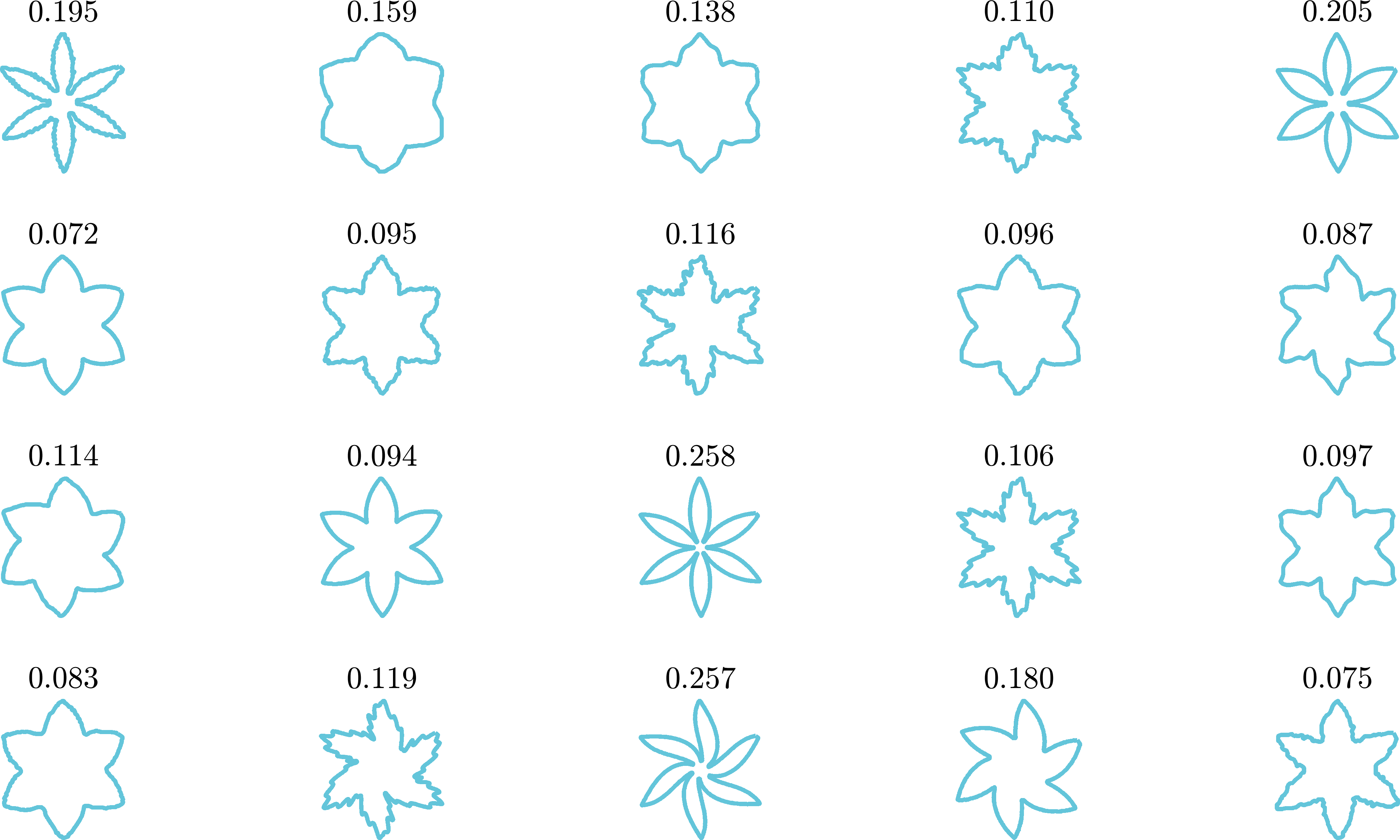}
        {\scriptsize (a)}
    \end{minipage}\hfill
    \begin{minipage}{0.24\textwidth}
        \centering
        \includegraphics[width=\textwidth]{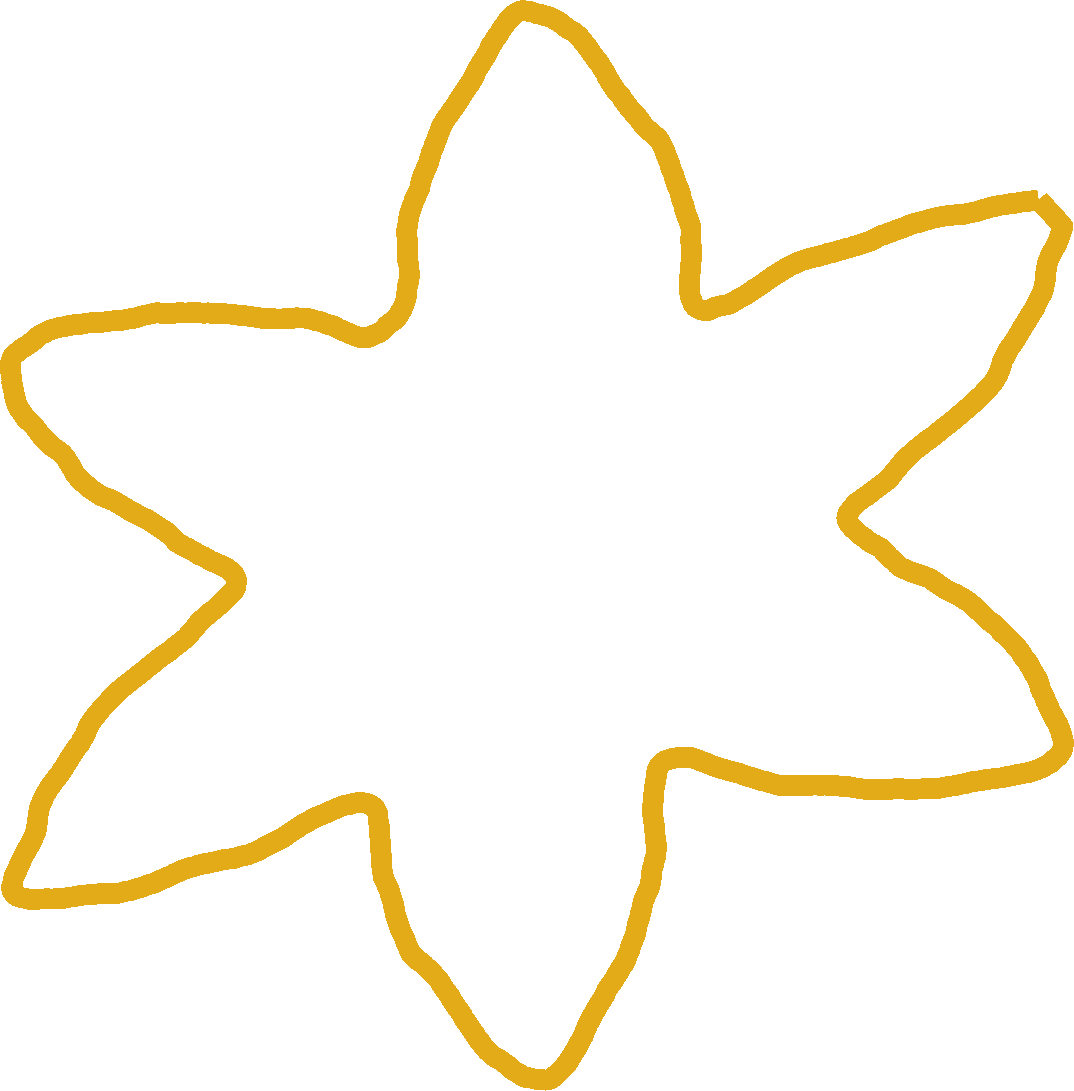}
        {\scriptsize (b)}
    \end{minipage}
    \caption{Panel (a): The shapes from the ``device1'' group in the MPEG-7 dataset. Panel (b): Their Riemannian center of mass computed with the shooting method.}\label{fig:device1}
\end{figure}

\subsection{Interpolation on the Stiefel manifold for model order reduction}\label{sec:interp_on_stiefel}

This section considers an example in the same order of ideas as in \protect{\cite{Amsallem:2011}}.
Specifically, we look at the interpolation of \emph{linear parametric reduced-order models}.
It is beyond the scope of this paper to discuss reduced-order models (ROMs); for a comprehensive review of model order reduction techniques, we refer the reader to \protect{\cite{Benner:2015}}.

Let us consider the dynamical model parameterized with respect to $ \mathbf{p} = \left[ p_1, \dots, p_d \right]\tr $
\begin{equation*}
\begin{cases}                                                                                    
      \dt{\mathbf{x}}(t;\mathbf{p}) = A(\mathbf{p})\,\mathbf{x}(t;\mathbf{p}) + B(\mathbf{p})\,\mathbf{u}(t) \\
      \mathbf{y}(t;\mathbf{p}) = C(\mathbf{p})\,\mathbf{x}(t;\mathbf{p}), \\
\end{cases}
\end{equation*}
with $ \mathbf{x}(t;\mathbf{p}) \in \R^{n} $ the vector of state variables, $ \mathbf{u}(t) \in \R^{m} $ the vector of inputs, and $ \mathbf{y}(t) \in \R^{q} $ the vector of outputs.
The system matrices are $ A(\mathbf{p}) \in \R^{n \times n}$, $ B(\mathbf{p}) \in \R^{n \times m} $, and $ C(\mathbf{p}) \in \R^{q\times n} $.

The \emph{reduced dynamical system} is
\begin{equation*}
   \begin{cases}
      \dt{\mathbf{x}}_{r}(t;\mathbf{p}) = A_{r}(\mathbf{p})\,\mathbf{x}_{r}(t;\mathbf{p}) + B_{r}(\mathbf{p})\,\mathbf{u}(t) \\
      \mathbf{y}_{r}(t;\mathbf{p}) = C_{r}(\mathbf{p})\,\mathbf{x}_{r}(t;\mathbf{p}), \\
   \end{cases}
\end{equation*}
with $ \mathbf{x}_{r} = V\tr\mathbf{x} $ the reduced-size vector, and system matrices $ A_{r} = V\tr\!A V $, $ B_{r} = V\tr\!B $, $ C_{r} = C V $, where $ V \equiv V(\mathbf{p}) \in \mathrm{St}(n,r) $. One needs to apply a ROM technique to obtain the matrix $ V $. Here, we adopt a proper orthogonal decomposition (POD) with $N$ snapshots \protect{\cite[\S 3.3.1]{Benner:2015}}.
Let $ X $ be the snapshot matrix that collects $N$ snapshots of the solution at different times $ t_{1}, \ldots, t_{N} $:
\[
   X = \big[ \mathbf{x}(t_{1};\mathbf{p}), \ldots, \mathbf{x}(t_{N};\mathbf{p}) \big].
\]
Then, the POD basis $ V $ is chosen as the $r$ left singular vectors of $ X $ corresponding to the $ r $ largest singular values. In MATLAB notation:
\[
   \big[ U, \sim, \sim \big] = \mathrm{svd}(X), \quad \text{then} \quad V = U(:,1\! :\! r).
\]

The process of interpolation on manifolds is explained in \protect{\cite[\S 4.1.2]{Amsallem:2011}} and \protect{\cite[\S 4.2.1]{Benner:2015}}. It can be summarized as follows, with Figure~\ref{fig:interpolation_stiefel} as a reference illustration.
For each parameter in a set of parameter values $\{ \mathbf{p}_1, \dots, \mathbf{p}_{K}\}$, one uses a model order reduction technique to derive a reduced-order basis $V_{i} \in \mathrm{St}(n,r)$.
This yields a set of local basis matrices $\{ V_1, \dots, V_{K}\}$. One of these matrices ($V_{3}$ in the figure) is chosen as a reference point to expand a tangent space to $ \mathrm{St}(n,r) $.
Then, given a new parameter value $\hat{\mathbf{p}}$, a basis $ \widehat{V} $ can be obtained by interpolating the local basis matrices on the tangent space. This process remains the same for general manifolds.

\begin{figure}[htbp]
   \centering
   \includegraphics[width=0.75\columnwidth]{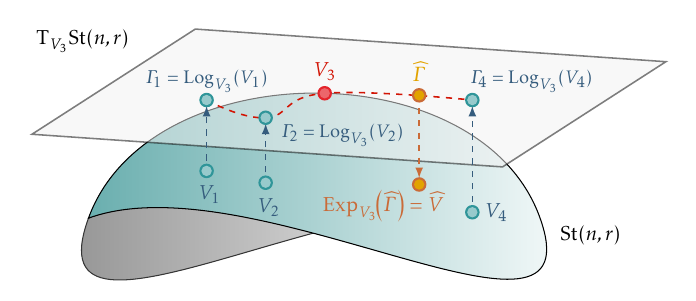}
   \caption{Interpolation on $\mathrm{St}(n,r)$.}
   \label{fig:interpolation_stiefel}
\end{figure}

As a concrete application, we consider the transient heat equation on a square domain with four disjoint discs, which model four cookies lying on a square tray in an oven \protect{\cite[\S 4.3.2]{Tobler:2012}}.
The problem is discretized with a finite element mesh with piecewise linear basis functions, resulting in a parameterized dynamical system of size $ n = 1169 $ of the form
\[
     \dt{\mathbf{x}}(t;\mathbf{p}) = - A(\mathbf{p})\, \mathbf{x}(t;\mathbf{p}) + \mathbf{b},
\]
where
\[
   \mathbf{p} = ( p_{1}, \, p_{2}, \, p_{3}, \, p_{4} ) \in [0,1]^{4}, \qquad A(\mathbf{p}) = \Big( A_{0} + \sum_{i = 1}^{4} p_{i} A_{i} \Big),
\]
and the matrices $ A_{1}, \ldots, A_{4} $ contain the contributions from the corresponding disc. The right-hand side $ \mathbf{b} $ is obtained from discretizing the source term $ f \equiv 1 $.

The simulation runs for $ t \in [0,500] $, with a time step $ \Delta t = 0.1 $.

In our example, $ \mathbf{p} = ( p_{1}, \, 0.10, \, 0.15, \, 0.70 ) $, with $ p_{1} \in [ 0.12, 1 ] $, i.e., the first parameter varies while the others are fixed.
As a ROM technique, we adopt a POD with 500 snapshots in time, with a reduced-model size $ r = 4 $.

We monitored the following error quantities:
\begin{itemize}
\item The error between $ \widehat{V}_{\mathrm{POD}} $, the basis obtained by directly applying a POD, and $ \widehat{V}_{\mathrm{interp}} $, the basis obtained by interpolating on $ \mathrm{St}(n,r) $ as described above:
\[
    \textrm{err-interp} = \| \widehat{V}_{\mathrm{POD}} - \widehat{V}_{\mathrm{interp}}\|_{2}.
\]
\item For the new operating point $ \mathbf{\hat{p}} = ( \hat{p}_{1}, \, 0.10, \, 0.15, \, 0.70 ) $, with $ \hat{p}_{1} = 0.40 $, the relative error on the output of the reduced model with respect to the output of the full model (see \protect{\cite[\S 2.4]{Benner:2015}}):
\[
    \textrm{err-$\mathbf{y}$} = \frac{\| \mathbf{y}_{r}(t,\mathbf{\hat{p}}) - \mathbf{y}(t,\mathbf{\hat{p}}) \|_{L_2}}{\|\mathbf{y}(t,\mathbf{\hat{p}}) \|_{L_2}}.
\]
\end{itemize}

To perform the interpolation on the tangent space, the MATLAB function \verb+interp1+ for 1D interpolation was used with three different methods: piecewise linear interpolation (linear), piecewise cubic spline interpolation (spline), and shape-preserving piecewise cubic interpolation (pchip).

Panel (a) of Figure~\ref{fig:interpolation_on_St} reports on the convergence behavior of $ \textrm{err-interp} $ with respect to the number $ K $ of local basis matrices. It is clear  that $ \textrm{err-interp} $ improves as we increase the number of local basis matrices. Moreover, the spline method appears to be the most accurate among the ones considered.

Next, we monitor the convergence behavior of $ \textrm{err-$\mathbf{y}$} $ with respect to the size $ r $ of the reduced model, $ r = 1, 2, \ldots, 20 $.
We choose $ \mathbf{p} = (0.12, \, 0.10, \, 0.15, \, 0.70) $ and considered five different PODs, with increasing snapshots, 10, 100, 500, 1\,000, 2\,500 respectively. We estimate that an $ \textrm{err-$\mathbf{y}$} $ of about $ 1\% $ is already good enough for applications in various engineering fields.
From Panel (b) of Figure~\ref{fig:interpolation_on_St}, one can observe that for reduced models obtained from 500, 1\,000, 2\,500 snapshot PODs, the $ 1\% $ error is achieved for a size $ r=4 $. On the other hand, when using fewer snapshots (like 10, 100), one needs $ r = 9,\,10 $ to achieve $ \textrm{err-$\mathbf{y}$} = 1\% $.

\begin{figure}[htbp]
    \centering
    \begin{minipage}{0.485\textwidth}
        \centering
        \includegraphics[width=\textwidth]{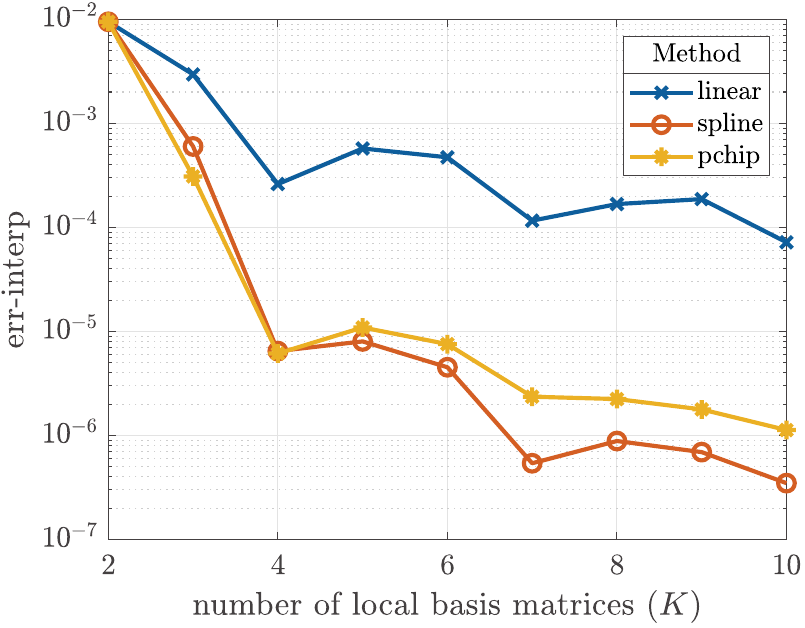}
        {\scriptsize (a)}
    \end{minipage}\hfill
    \begin{minipage}{0.485\textwidth}
        \centering
        \includegraphics[width=\textwidth]{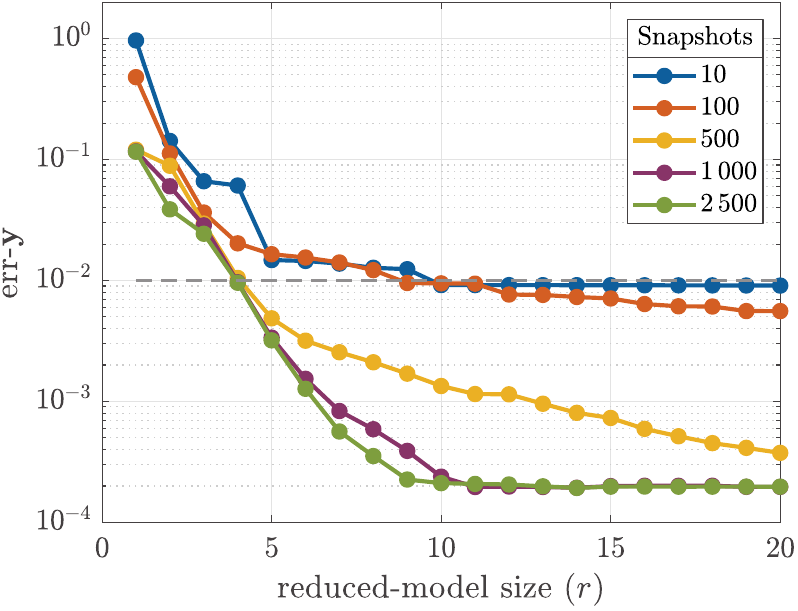}
        {\scriptsize (b)}
    \end{minipage}
    \caption{Panel (a): error of the interpolation on the Stiefel manifold. Panel (b): error of the ROMs with respect to the full model.}\label{fig:interpolation_on_St}
\end{figure}

\section{Conclusions and outlook} \label{sec:conclusions}

In this work, we studied the shooting method, a classical numerical algorithm for solving boundary value problems, to compute the distance between two given points on the Stiefel manifold. We provided shooting methods for calculating geodesics on the Stiefel manifold with neat formulas for the Jacobians involved. We offered an elegant way to start the algorithm and obtain the desired quadratic convergence.
Moreover, we conducted a preliminary analysis of the explicit expression for the Jacobian of the matrix exponential involved in the single shooting method and related it to differential geometric properties.
Numerical experiments demonstrate the algorithms in terms of performance and accuracy, while the applications considered show how they can be used in practical circumstances.

As a future outlook, we may use the knowledge gained in this work to develop a computationally cheaper algorithm. For example, when $ \|A\|_{2} $ is relatively small, we may approximate the Jacobian of the matrix exponential by the identity matrix. We expect that we will lose in the order of convergence of the method, but we will gain in terms of time and memory storage. 
Another promising research direction is exploring the connection between shooting algorithms for geodesics and domain decomposition methods. These topics will be the object of future studies.

\section*{Acknowledgments}

The author is grateful to Bart Vandereycken for his guidance during the author's Ph.D. thesis.
Part of this work was completed during the author's Ph.D. thesis at the University of Geneva, SNSF fund number 163212\footnote{SNSF webpage: \href{https://data.snf.ch/grants/grant/163212}{https://data.snf.ch/grants/grant/163212}}. It was completed during the author's postdoctoral fellowship at the National Center for Theoretical Sciences in Taiwan.

\appendix

\section{Single shooting} \label{app:simple-shooting}

\subsection{Freedom in choosing the geodesic}\label{sec:freedom}

As mentioned in Remark~\ref{rmk:Y_0perp}, the matrix $Y_{0\perp}$ does not need to be orthonormal; in fact, its only requirement is that it has to span $\cY_{0}^{\perp}=(\mathrm{span}(Y_{0}))^{\perp}$, the orthogonal subspace to $\cY_{0}=\mathrm{span}(Y_{0})$. In this appendix, we are going to show this, starting from the geodesic
\[
Y(t) = \big[ Y_{0} \ \  Y_{0\perp} \big] \, \exp\!\left( \begin{bmatrix}
\Omega   &   -K\tr \\
K        &    O_{n-p}
\end{bmatrix} t \right)
\begin{bmatrix}
I_{p} \\
O_{(n-p)\times p}
\end{bmatrix}.
\]
Let $ M $ be any $ (n-p) $-by-$(n-p)$ invertible matrix, and define
\[
\widetilde{M} =
\begin{bmatrix}
I_{p} &   \\
      & M
\end{bmatrix}, \qquad
\widetilde{M}^{-1} =
\begin{bmatrix}
I_{p} &   \\
      & M^{-1}
\end{bmatrix},
\qquad
\widetilde{M}^{-1} \widetilde{M} = \widetilde{M}\widetilde{M}^{-1} = I_{n}.
\]
Observe that
\[
\big[ Y_{0} \ \  Y_{0\perp} \big] \, \widetilde{M}^{-1} = \big[ Y_{0} \ \  Y_{0\perp} \big]  \begin{bmatrix}
I_{p} &    \\
      & M^{-1}
\end{bmatrix} = \big[ Y_{0} \ \ Y_{0\perp} M^{-1} \big],
\]
and
\[
\widetilde{M} \begin{bmatrix}
I_{p} \\
O_{(n-p)\times p}
\end{bmatrix} =
\begin{bmatrix}
I_{p} &    \\
      & M
\end{bmatrix}
\begin{bmatrix}
I_{p} \\
O_{(n-p)\times p}
\end{bmatrix}=
\begin{bmatrix}
I_{p} \\
O_{(n-p)\times p}
\end{bmatrix}.
\]
In the following steps, we use these facts together with the property 
\[
   \widetilde{M} \exp(A) \widetilde{M}^{-1} = \exp(\widetilde{M} A \widetilde{M}^{-1}),
\]   
which holds for any invertible matrix $\widetilde{M}$.
\begin{align*}
Y(t) & = \big[ Y_{0} \ \  Y_{0\perp} \big] \, \widetilde{M}^{-1} \widetilde{M} \, \exp\!\left( \begin{bmatrix}
\Omega   &   -K\tr \\
K        &    O_{n-p}
\end{bmatrix} t \right) \widetilde{M}^{-1} \widetilde{M}
\begin{bmatrix}
I_{p} \\
O_{(n-p)\times p}
\end{bmatrix}, \\
& = \big[ Y_{0} \ \ Y_{0\perp} M^{-1} \big] \, \exp\!\left( \widetilde{M} \begin{bmatrix}
\Omega   &   -K\tr \\
K        &    O_{n-p}
\end{bmatrix} \widetilde{M}^{-1} t \right)
\begin{bmatrix}
I_{p} \\
O_{(n-p)\times p}
\end{bmatrix}, \\
& =  \big[ Y_{0} \ \ Y_{0\perp} M^{-1} \big] \, \exp\!\left( \begin{bmatrix}
\Omega   &   -K\tr M^{-1} \\
M K        &    O_{n-p}
\end{bmatrix}  t \right)
\begin{bmatrix}
I_{p} \\
O_{(n-p)\times p}
\end{bmatrix}.
\end{align*}
Since the matrix $ M $ is invertible, it can be regarded as a change of basis. Hence, it appears from the last expression that there is freedom in choosing $Y_{0\perp}$ since it can be any matrix whose columns form a basis for $\cY_{0}^{\perp}$.

\subsection{Smaller formulation}\label{sec:baby}

In this section, we prove that when $ p \leqslant \frac{n}{2} $, the geodesic problem on $\Stnp$ can be reformulated into an equivalent problem on $\mathrm{St}(2p,p)$.
We start from \eqref{eq:closed-form-sol-geodesic} with $t=1$, namely,
\[
Y_{1} = \big[ Y_{0} \ \  Y_{0\perp} \big] \exp\!\left( \begin{bmatrix}
\Omega   &   -K\tr \\
K        &    O_{n-p}
\end{bmatrix} \right)
\begin{bmatrix}
I_{p} \\
O_{(n-p)\times p}
\end{bmatrix}.
\]
Consider the QR decomposition of $K$
\[
K = \big[ Q \ \ Q_{\perp} \big] \begin{bmatrix}
R \\ O_{(n-2p)\times p}
\end{bmatrix} = Q R,
\]
where $\big[ Q \ \ Q_{\perp} \big] \in \R^{(n-p)\times (n-p)} $ is the orthogonal factor of $K$, whose blocks $Q \in \R^{(n-p)\times p}$, $Q_{\perp} \in \R^{(n-p)\times (n-2p)}$ are orthonormal, and $R\in\R^{p\times p}$ is upper triangular.
Inserting this decomposition into the matrix
\[
\begin{bmatrix}
\Omega   &   -K\tr \\
K        &    O_{n-p}
\end{bmatrix},
\]
we get
\begin{small}
\[
\begin{bmatrix}
\Omega  &  \big[ -R\tr  \ \  O \big] \big[ Q \ \ Q_{\perp} \big]\tr \\
\big[ Q \ \ Q_{\perp} \big] \! \begin{bmatrix}
R \\ O
\end{bmatrix}  &  O_{n-p}
\end{bmatrix}
=
\begin{bmatrix}
I_p  &   \\
     &  \big[ Q \ \ Q_{\perp} \big]
\end{bmatrix}
\begin{bmatrix}
\Omega  &  -R\tr   &   \\
R       &     O_p  &  \\
        &          &  O_{n-2p}
\end{bmatrix}
\begin{bmatrix}
I_p  &  \\
     &  \big[ Q \ \ Q_{\perp} \big]\tr
\end{bmatrix}.
\]
\end{small}
Substituting this expression into the argument of the matrix exponential, and using the property $ \exp( \widetilde{Q} M \widetilde{Q}\tr ) = \widetilde{Q}\exp(M) \widetilde{Q}\tr $ for any orthogonal matrix $ \widetilde{Q} $,  we get
\[
Y_{1} = \big[ Y_{0} \ \  Y_{0\perp} \big]
\begin{bmatrix}
I_p   &  \\
      &  \big[ Q \ \ Q_{\perp} \big]
\end{bmatrix}
\exp\!\left( 
\begin{bmatrix}
\Omega  &  -R\tr   &  \\
R       &     O_p  &  \\
        &          &  O_{n-2p}
\end{bmatrix}
\right)
\begin{bmatrix}
I_p   &    \\
      &  \big[ Q \ \ Q_{\perp} \big]\tr
\end{bmatrix}
\begin{bmatrix}
I_{p} \\
O_{(n-p)\times p}
\end{bmatrix}.
\]
Using the fact that the argument of $ \exp $ is a block diagonal matrix, we can write
\[
Y_{1} = \begin{bmatrix} Y_{0}  & Y_{0\perp} Q  & Y_{0\perp}Q_{\perp} \end{bmatrix}
\begin{bmatrix}
\exp\!\left( \begin{bmatrix} \Omega  &  -R\tr \\  R       &     O_p \end{bmatrix} \right)   &  \\
 &  I_{(n-2p)}
\end{bmatrix}
\begin{bmatrix}
I_{p} \\
O_{p} \\
O_{(n-2p)\times p}
\end{bmatrix}.
\]
We collect the matrices to make the products conformable
\[
Y_{1} = \begin{bmatrix} \underbrace{\big[ Y_{0} \ \ Y_{0\perp} Q \big]}_{\in\R^{n\times 2p}} & \underbrace{Y_{0\perp}Q_{\perp}}_{\in\R^{n\times (n-2p)}} \end{bmatrix}
\begin{bmatrix}
\exp\!\left( \begin{bmatrix} \Omega  &  -R\tr \\  R       &     O_p \end{bmatrix} \right)   &   \\
 &  I_{(n-2p)}
\end{bmatrix}
\begin{bmatrix}
\begin{bmatrix}
I_{p} \\
O_{p}
\end{bmatrix}\\
O_{(n-2p)\times p}
\end{bmatrix}.
\]
Finally, we have obtained the smaller formulation \eqref{eq:baby}
\begin{equation*}
Y_{1} = \big[ Y_{0} \ \ Y_{0\perp} Q \big] \, \exp\!\left( \begin{bmatrix} \Omega  &  -R\tr \\  R       &     O_p \end{bmatrix} \right)\begin{bmatrix}
I_{p} \\
O_{p}
\end{bmatrix}.
\end{equation*}

\section{Fr\'{e}chet derivatives} \label{app:frechet-derivatives}

\subsection{First-order Fr\'{e}chet derivative of a matrix function}
The Fr\'{e}chet derivative of a matrix function $f \colon \C^{n\times n} \to \C^{n\times n}$ at $X \in \C^{n\times n}$ is the unique linear function $ \D\! f(X) [\cdot] $ of the matrix $E \in \C^{n\times n}$, that satisfies
\begin{equation}\label{eq:1}
   f(X+E) - f(X) - \D\! f(X)[E] = o(\| E \|).
\end{equation}
The mapping itself is denoted by either $ \D\! f(X)[\cdot] $ or $ \D\! f(X) $, while the value of the mapping for direction $E$ (i.e., the directional derivative) is denoted by $\D\! f(X)[E]$.

Since $ \D\! f(X) \colon \C^{n\times n} \to \C^{n\times n}$ is a linear operator, one can write
\begin{equation}\label{eq:2}
   \vecop\!\big( \D\! f(X)[E] \big) = J_{f}^{X} \vecop(E)
\end{equation}
for some $n^2\times n^2$ complex matrix $ J_{f}^{X} $ independent of $E$. We refer to $ J_{f}^{X} $ as the Kronecker representation of the Fr\'{e}chet derivative, or simply as the Jacobian matrix.

\subsection{Singular values of \texorpdfstring{$ J_{f}^{X} $}{TEXT}}\label{app:sing_val_KX}

In this section, we report some results that are used in the analysis of the Jacobian $ J_{\exp(A)}^{A} $ (see section~\ref{sec:analysis_J_exp}).
The operator norm of $\D\! f(X)$ for the Frobenius norm is defined by
\[
   \| \D\! f(X) \|_{\F} = \max_{Z \neq 0} \frac{\| \D\! f(X)[Z] \|_{\F}}{\| Z \|_{\F}} = \max_{\|Z\|_{\F}=1} \| \D\! f(X)[Z] \|_{\F}.
\]
By vectorizing $ \D\! f(X)[Z] $ as in~\eqref{eq:2}, and using the fact that the Euclidean norm of $z = \vecop(Z)$ equals the Frobenius norm of $Z$, we can also write
\[
   \| \D\! f(X) \|_{\F} = \max_{\| z \|_{2} = 1} \|  J_{f}^{X}  \, z \|_{2}  = \|  J_{f}^{X}  \|_{2} = \sigma_{\max}( J_{f}^{X} ),
\]
where $ \sigma_{\max}( J_{f}^{X} ) $ is the largest singular value of $ J_{f}^{X} $.

We have the following important theorem.

\begin{theorem}[\protect{\cite[Cor.~3.16]{Higham:2008}}]\label{thm:max_sing_K}
If $X \in \mathbb{C}^{n\times n}$ is normal, then
\begin{equation}\label{eq:max_sing_K}
   \sigma_{\max}( J_{f}^{X} ) = \max_{\lambda,\mu \in \Lambda(X)} \big| f[\lambda,\mu] \big|,
\end{equation}
where $ \Lambda(X) $ denote the eigenvalues of $X$, and $f[\lambda,\mu]$ is the first-order divided difference defined by 
\begin{equation}\label{eq:def_div_diff}
f[\lambda,\mu] = \begin{cases}
\ \frac{f(\lambda)-f(\mu)}{\lambda-\mu}, & \lambda\neq \mu, \\
\ f'(\lambda), & \lambda = \mu.
\end{cases}
\end{equation}
\end{theorem}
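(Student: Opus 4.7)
The plan is to apply Theorem~\ref{thm:max_sing_K} after computing the divided differences of $\exp$ on the spectrum of $A$, and to bootstrap from the max statement of that theorem to a min statement. Since $A$ is a real skew-symmetric matrix (by its block structure, $A\tr = -A$), it is normal, and its eigenvalues are purely imaginary, coming in conjugate pairs $\{\pm i\beta_{j}\}_{j=1}^{\lfloor n/2\rfloor}$ with $\beta_{j}\geq 0$ (plus a $0$ eigenvalue if $n$ is odd). The spectral norm of a normal matrix equals its spectral radius, so $\alpha = \|A\|_{2} = \max_{j}\beta_{j}$, and in particular $\pm i\alpha$ both belong to $\Lambda(A)$.

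Next, I would compute the divided differences. For $\lambda = ia$, $\mu = ib$ with $a,b\in\R$ distinct, the identity $|e^{ia}-e^{ib}| = |e^{i(a+b)/2}|\cdot|e^{i(a-b)/2}-e^{-i(a-b)/2}| = 2|\sin((a-b)/2)|$ yields
\[
\big|\exp[ia,ib]\big| = \frac{2|\sin((a-b)/2)|}{|a-b|} = \Big|\sinc\!\big(\tfrac{a-b}{2}\big)\Big|,
\]
while the coincident case $a=b$ uses the derivative branch of \eqref{eq:def_div_diff} to give $|\exp'(ia)| = |e^{ia}| = 1 = |\sinc 0|$. Hence, over all pairs of eigenvalues of $A$, $|\exp[\lambda,\mu]|$ takes the form $|\sinc((a-b)/2)|$ with $a,b\in\{\pm\beta_{j}\}$. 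Theorem~\ref{thm:max_sing_K} then gives $\sigma_{\max}(J_{\exp(A)}^{A}) = \max_{a,b}|\sinc((a-b)/2)| = 1$, attained at any repeated eigenvalue (e.g.\ $\lambda=\mu=i\alpha$), since $|\sinc|\leq 1$ on $\R$.

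For the minimum, I would invoke the Daleckii--Krein representation that underlies the proof of Theorem~\ref{thm:max_sing_K}: for normal $X = U\Lambda U^{*}$, the Fr\'echet derivative acts as $\D f(X)[E] = U\bigl((f[\lambda_{i},\lambda_{j}])_{ij}\odot(U^{*}EU)\bigr)U^{*}$, where $\odot$ is the Hadamard product. Because unitary conjugation is an isometry on $\C^{n\times n}$ with respect to the Frobenius inner product, the singular values of $J_{f}^{X}$ are \emph{exactly} the $n^{2}$ moduli $\{|f[\lambda_{i},\lambda_{j}]|\}_{i,j}$. This extends~\eqref{eq:max_sing_K} to the min. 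Applied to our $A$, $\sigma_{\min}(J_{\exp(A)}^{A}) = \min_{a,b\in\{\pm\beta_{j}\}}|\sinc((a-b)/2)|$. Since $|(a-b)/2|\leq\alpha$ on this set, with equality at $(a,b)=(\alpha,-\alpha)$, and since $|\sinc|$ is strictly decreasing on $[0,\pi]$ (its derivative $(x\cos x-\sin x)/x^{2}$ being negative there), the minimum is attained at the endpoint, yielding $|\sinc\alpha|$.

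The main obstacles I anticipate are twofold: (i) cleanly justifying the passage from the max-only statement of Theorem~\ref{thm:max_sing_K} to an identification of the \emph{entire} singular-value set, which requires the Daleckii--Krein diagonalization rather than just the operator-norm interpretation; and (ii) ensuring the monotonicity argument is confined to the regime $\alpha\in[0,\pi]$ that is physically meaningful here, given the lower bound $0.89\,\pi$ on the injectivity radius recalled in section~\ref{sec:connection}. Outside this regime $|\sinc|$ oscillates and the minimum need not equal $|\sinc\alpha|$, so the statement should be read with this implicit range in mind.
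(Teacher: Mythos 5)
Your proposal does not prove the statement under review. The statement is Theorem~\ref{thm:max_sing_K} itself --- the identity $\sigma_{\max}(J_{f}^{X}) = \max_{\lambda,\mu\in\Lambda(X)}\big|f[\lambda,\mu]\big|$ for an arbitrary function $f$ and an arbitrary normal $X$ --- but your argument opens with ``apply Theorem~\ref{thm:max_sing_K}'' and then specializes to $f=\exp$ and $X=A$ skew-symmetric. What you have written is in substance a proof of Lemma~\ref{lemma:sing_val_J_expA} (and a reasonable one, close to the paper's appendix~\ref{app:proof_lemma_sing_val_J_expA}, with the added care of restricting the minimization to genuine eigenvalue pairs and of flagging the range $\alpha\in[0,\pi]$). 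As a proof of Theorem~\ref{thm:max_sing_K} it is circular: the conclusion is assumed in the first line.

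The one ingredient in your text that does bear on the theorem is the Daleckii--Krein representation $\D\! f(X)[E] = U\bigl((f[\lambda_i,\lambda_j])_{ij}\odot(U^{*}EU)\bigr)U^{*}$, which you invoke only parenthetically as ``underlying'' the theorem. That is indeed the right tool: vectorizing it gives $J_{f}^{X} = (\overline{U}\otimes U)\,\mathrm{diag}\bigl(f[\lambda_i,\lambda_j]\bigr)\,(\overline{U}\otimes U)^{*}$, and since $\overline{U}\otimes U$ is unitary, the singular values of $J_{f}^{X}$ are exactly the moduli $\big|f[\lambda_i,\lambda_j]\big|$; taking the maximum yields \eqref{eq:max_sing_K}, and the minimum yields Theorem~\ref{thm:min_sing_K} for free. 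A complete argument would have to establish that representation (diagonalize $X=U\Lambda U^{*}$, reduce to the diagonal case, and compute $\D\! f(\Lambda)[E]$ entrywise) rather than assert it. For context, the paper does not prove Theorem~\ref{thm:max_sing_K} either --- it is quoted from Higham --- and its proof of the companion minimum result proceeds by exactly this diagonalization-and-reduction route, so your sketch is pointing at the correct strategy without carrying it out.
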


If $ \D\! f(X) $ is invertible, we have a similar property for the minimal singular value:
\begin{theorem}\label{thm:min_sing_K}
If $X \in \mathbb{C}^{n\times n}$ is normal, then
\[
   \sigma_{\min}( J_{f}^{X} ) = \min_{\lambda,\mu \in \Lambda(X)} \big| f[\lambda,\mu] \big|.
\]
\end{theorem}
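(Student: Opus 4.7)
The plan is to mirror the proof of Theorem~\ref{thm:max_sing_K} from \protect{\cite[Cor.~3.16]{Higham:2008}} and actually establish the stronger statement that the \emph{full set} of singular values of $J_f^X$ coincides with $\{\,|f[\lambda,\mu]| : \lambda,\mu \in \Lambda(X)\,\}$. Both Theorem~\ref{thm:max_sing_K} and Theorem~\ref{thm:min_sing_K} then drop out immediately by taking the max or the min over this finite multiset, so the latter requires no argument beyond what the former already uses.

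Concretely, I would proceed as follows. Since $X$ is normal it admits a unitary eigendecomposition $X = U D U^{*}$ with $D = \diag(\lambda_1,\ldots,\lambda_n)$. A standard identity for the Fr\'{e}chet derivative of a matrix function at a normal argument, see \protect{\cite[Thm.~3.11]{Higham:2008}}, gives
\[
   \D f(X)[E] \;=\; U\bigl(\Phi \circ (U^{*} E U)\bigr) U^{*},
\]
where $\circ$ is the Hadamard product and $\Phi \in \C^{n\times n}$ is the Loewner-type matrix with entries $\Phi_{ij} = f[\lambda_i,\lambda_j]$ in the sense of~\eqref{eq:def_div_diff}. Vectorizing via the identity $\vecop(U V U^{*}) = (\bar U \otimes U)\vecop(V)$ converts this into the Kronecker form
\[
   J_f^{X} \;=\; (\bar U \otimes U)\,\diag\!\bigl(\vecop(\Phi)\bigr)\,(\bar U \otimes U)^{*}.
\]
Because $\bar U \otimes U$ is unitary, $J_f^{X}$ is unitarily similar to a diagonal matrix whose entries are exactly the divided differences $f[\lambda_i,\lambda_j]$. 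Unitary similarity preserves singular values, so the singular-value set of $J_f^{X}$ is $\{\,|f[\lambda_i,\lambda_j]| : 1 \le i,j \le n\,\}$, and taking the minimum over this finite set yields the claim.

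There is no real obstacle; the argument is a verbatim dual of the one for $\sigma_{\max}$, with $\max$ replaced by $\min$ in the last line. The only subtlety worth flagging is that the informal assumption ``$\D f(X)$ invertible'' made just before the theorem is equivalent to $\sigma_{\min}(J_f^{X})>0$, i.e., to all divided differences $f[\lambda,\mu]$ being nonzero; if some divided difference vanishes the identity above still delivers the correct value $\sigma_{\min}(J_f^{X}) = 0 = \min_{\lambda,\mu}|f[\lambda,\mu]|$, so invertibility is not strictly required for the stated equality to hold.
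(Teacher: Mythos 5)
Your proposal is correct, and it reaches the conclusion by a route that is related to but cleaner than the one in the paper. The paper starts from the variational characterization $\sigma_{\min}(J_{f}^{X}) = \min_{\|E\|_{\F}=1}\|\D\! f(X)[E]\|_{\F}$ (citing Golub--Van Loan), reduces to the diagonal case via $\D\! f(X)[E] = Z\big(\D\! f(D)[\widetilde{E}]\big)Z^{-1}$ with $Z$ unitary, and then argues that the minimum of $\|\Phi\circ\widetilde{E}\|_{\F}$ over unit-norm $\widetilde{E}$ equals $\min_{i,j}|f[\lambda_i,\lambda_j]|$, mirroring Higham's argument for the maximum. You instead write down the Kronecker representation explicitly, $J_{f}^{X} = (\bar U\otimes U)\,\diag(\vecop(\Phi))\,(\bar U\otimes U)^{*}$, and observe that this unitary similarity to a diagonal matrix exhibits the \emph{entire} singular-value multiset of $J_{f}^{X}$ as $\{|f[\lambda_i,\lambda_j]|\}$; both Theorem~\ref{thm:max_sing_K} and Theorem~\ref{thm:min_sing_K} then follow by taking the max or min. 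The two arguments rest on the same key fact (the Daleckii--Krein/Hadamard-multiplier structure of the Fr\'echet derivative at a normal matrix), but yours buys a strictly stronger conclusion with no extra work, avoids the need to separately justify why the Hadamard-product minimization attains $\min_{i,j}|\Phi_{ij}|$, and makes transparent your correct side remark that the invertibility of $\D\! f(X)$ assumed in the paper's preamble to the theorem is not actually needed for the equality to hold.
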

\begin{proof}
We adjust the proof of \protect{\cite[Cor.~3.16]{Higham:2008}} accordingly.
We start from the variational property \protect{\cite[Theorem~8.6.1]{Golub:2013}}
\[
   \sigma_{\min}( J_{f}^{X} ) =  \min_{\|E\|_{\F}=1} \| \D\! f(X)[E] \|_{\F},
\]
and we use $ \D\! f(X)[E] = Z \big( \D\! f(D)[\widetilde{E}] \big) Z^{-1} $, with $ D = \diag(\lambda_{i}) $ and $ \widetilde{E} = Z^{-1} E Z $, as in \protect{\cite[Cor.~3.12]{Higham:2008}}. Then
\[
   \sigma_{\min}( J_{f}^{X} ) =  \min_{\|\widetilde{E}\|_{\F}=1} \| Z \big( \D\! f(D)[\widetilde{E}]\big) Z^{-1} \|_{\F} = \min_{\|\widetilde{E}\|_{\F}=1} \| \D\! f(D)[\widetilde{E}] \|_{\F} = \min_{i,j} \big| f[\lambda_{i},\lambda_{j}] \big|,
\]
where for the last equality we used the same reasoning as in the proof of \protect{\cite[Cor.~3.13]{Higham:2008}}.
\end{proof}

\subsection{Proof of Lemma~\ref{lemma:sing_val_J_expA}}\label{app:proof_lemma_sing_val_J_expA}

\begin{proof}
Since $A$ is a real skew-symmetric matrix, the eigenvalues of $A$ are purely imaginary. Hence, we may denote them as $ \icomp\! x$ and $ \icomp\! y$, with $ x, y \in \R $.
Let us rewrite \eqref{eq:max_sing_K} as
\[
   \| J_{\exp(A)}^{A} \|_{2} = \sigma_{\max}\big(J_{\exp(A)}^{A}\big) = \max_{|x|,|y| \leqslant \alpha} \big| \exp [\icomp\!x,\icomp\!y] \big|,
\]
where $|x|,|y| \leqslant \alpha $ because the absolute value of an eigenvalue of a normal matrix cannot exceed any norm of that matrix.
The maximum is attained for $ y = x $, and using the definition in \eqref{eq:def_div_diff}, we get
\[
   \sigma_{\max}\big(J_{\exp(A)}^{A}\big) = \max_{|x| \leqslant \alpha} \big| \exp [\icomp\!x,\icomp\!x] \big| = \max_{|x| \leqslant \alpha} \big| \exp'(\icomp\!x) \big| = \max_{|x| \leqslant \alpha} \big| \exp(\icomp\!x) \big| = 1.
\]
This shows that the maximum singular value of $J_{\exp(A)}^{A}$ is always 1.

For the minimum singular value, let us specialize Theorem~\ref{thm:min_sing_K} to our case:
\[
   \sigma_{\min}\big(J_{\exp(A)}^{A}\big) = \min_{|x|,|y| \leqslant \alpha} \big| \exp [\icomp\!x,\icomp\!y] \big| = \min_{|x|,|y| \leqslant \alpha } \underbrace{\left\vert \frac{e^{\icomp\!x} - e^{\icomp\!y}}{\icomp\!x-\icomp\!y} \right\vert}_{\eqqcolon g(x,y)}.
\]
The minima of $g(x,y)$ are attained on the anti-diagonal at the corners, namely, when $ x = \alpha $, $ y = -\alpha $ and $ x = -\alpha $, $ y = \alpha $. This gives:
\[
\sigma_{\min}\big(J_{\exp(A)}^{A}\big) = \left\vert \frac{e^{ \icomp\!\alpha} - e^{- \icomp\!\alpha}}{2 \icomp\!\alpha} \right\vert = \left\vert \frac{\sin \alpha}{\alpha} \right\vert = \left\vert \sinc \alpha \right\vert.
\]
\end{proof}

\section{Jacobians for multiple shooting} \label{app:multiple-shooting}

This appendix reports the explicit formulas for the Jacobian matrices used in the multiple shooting method on the Stiefel manifold $ \Stnp $ (see section~\ref{sec:multiple_shooting}).

Let $ \Sigma_{1} $ denote a base point and $ \Sigma_{2} $ its corresponding tangent vector as explained in section~\ref{sec:multiple_shooting} and illustrated in Figure~\ref{fig:multiple_shooting}.

To compute the Jacobian matrices appearing in \eqref{eq:MS_jacobian_G_k}, we formulate the geodesic equation \eqref{eq:closed-form-sol-geodesic} using the singular value decomposition of the base point $\Sigma_{1}$, namely,  $ \Sigma_{1} = USV\tr $.
Let us consider the partitioned matrices (MATLAB notation)
\begin{equation*}
U_{p} = U(\colon\!,1\colon\!p), \quad U_{\perp} = U(\colon\!,p+1 \colon\!\mathrm{end}), \quad V_{p} = V(\colon\!,1\colon\!p), \quad V_{\perp} = V(\colon\!,p+1\colon\!\mathrm{end}),
\end{equation*}
and let $ \widetilde{Q} = \big[ \Sigma_{1} \ \ U_{\perp} \big]$. Then, the SVD formulations of the geodesic and its derivative are
\begin{equation*}
    Z_1(t) = \widetilde{Q} \, \exp\!\left( t A \right)
    \begin{bmatrix}
        I_p \\
        O_{(n-p)\times p}
    \end{bmatrix},\qquad
    Z_2(t) = \widetilde{Q} \, \exp\!\left( t A \right) A
    \begin{bmatrix}
        I_p \\
        O_{(n-p)\times p}
    \end{bmatrix},
\end{equation*}
where
\begin{equation*}
    A(\widetilde{Q},\Sigma_{2}) = \begin{bmatrix}
        \big[ I_{p} \ \ O \big] \widetilde{Q}\tr \Sigma_{2}     &  -\Big[ \big[ O \ \ I_{n-p} \big] \widetilde{Q}\tr \Sigma_{2}\Big]\tr  \\[6pt]
        \big[ O \ \ I_{n-p} \big] \widetilde{Q}\tr \Sigma_{2}  &   O_{n-p}
    \end{bmatrix} =
    \begin{bmatrix}
        \Sigma_{1}\tr \Sigma_{2}     &  -( U_{\perp}\tr \Sigma_{2})\tr  \\[6pt]
        U_{\perp}\tr \Sigma_{2}  &   O_{n-p}
    \end{bmatrix}.
\end{equation*}

\subsection{Jacobians with respect to the base point}

Let us first compute the Jacobians of the geodesic and its derivative with respect to the base point $\Sigma_{1}$, i.e.,
\[
   J_{Z_1}^{\Sigma_{1}} = \frac{\partial Z_1}{\partial \Sigma_{1}} \quad \text{and} \quad J_{Z_2}^{\Sigma_{1}} = \frac{\partial Z_2}{\partial \Sigma_{1}}.
\]
We adopt for the functions involved the notation:
\begin{itemize}
   \item $s(\Sigma_{1}) = \mathtt{svd}(\Sigma_{1}) $, performs the SVD of $ \Sigma_{1} $ and returns $ U_{p} $,   $ U_{\perp} $, $ V_{p} $, $ V_{\perp} $;
   \item $\tilde{q}(s(\Sigma_{1}))= \big[ \Sigma_{1} \ \ U_{\perp} \big] = \widetilde{Q}$, builds the matrix $ \widetilde{Q} $ from $ \Sigma_{1} $ and $ U_{\perp} $;
   \item $h(\tilde{q}(s(\Sigma_{1}))) = A $, builds the matrix argument of $ \exp $;
   \item $g(h(\widetilde{Q})) = \exp(A)$, performs the matrix exponential of $ A $.
\end{itemize}
To compute $\frac{\partial Z_1}{\partial \Sigma_{1}}$ we have to consider the derivative of a product and the chain rule for a composite function:
\begin{equation*}
\begin{split}
    \D\! Z_{1}(\Sigma_{1})[E] = & \ \D\! \tilde{q}\big( s(\Sigma_{1}), \ \D\! s(\Sigma_{1})[E]\big) \, \exp(A) \begin{bmatrix} I_p \\ O \end{bmatrix} \\
    & + \widetilde{Q} \, \D\! g \Big( h(\tilde{q}(s(\Sigma_{1}))), \ \D\! h \big[ \tilde{q}(s(\Sigma_{1})), \ \D\! \tilde{q}\left( s(\Sigma_{1}), \ \D\! s(\Sigma_{1})[E] \right) \big] \Big) \begin{bmatrix} I_p \\ O \end{bmatrix}.
\end{split}
\end{equation*}
As in appendix~\ref{app:frechet-derivatives}, $ \D\! f(A)[E] $ denotes the Fr\'{e}chet derivative of $ f $ at the matrix $ A $ in the direction of $ E $.
Vectorizing the last expression we get
\begin{equation}\label{eq:vec_LZ_1}
    \vecop(\D\! Z_{1}(\Sigma_{1})[E]) = \left(  \big[ I_{p} \ \ O  \big] \exp(A)\tr \otimes I_n \right)\vecop(\D\! \tilde{q}) + \left( \big[ I_{p} \ \ O  \big] \otimes \widetilde{Q} \right) \vecop(\D\! g).
\end{equation}
Here,
\begin{equation*}
    \vecop(\D\! g(A)[E]) = J_{\exp(A)}^{A} \vecop(\D\! h),
\end{equation*}
with $ J_{\exp(A)}^{A} $ the Jacobian of $ \exp $ with respect to its argument.
As we did for single shooting (see section~\ref{sec:ss_parametrization}), we introduce a linear map $ T $ that maps a block-wise vectorization into the ordinary column-stacking vectorization. This is achieved by:
\begin{equation*}
    \vecop(\D\! h) = T \cdot \blkvec(\D\! h),
\end{equation*}
where
\begin{equation*}
    \blkvec(\D\! h) = \begin{bmatrix}
        \vecop\!\big( \big[ I_{p} \ \ O  \big]  \D\! \tilde{q}\tr \Sigma_{2} \big) \\[4pt]
        \vecop\!\big( \big[ O \ \ I_{n-p} \big]  \D\! \tilde{q}\tr \Sigma_{2} \big) \\[4pt]
       -\vecop\!\big( \big[ O \ \ I_{n-p} \big] \D\! \tilde{q}\tr \Sigma_{2} \big)\tr  \\[4pt]
        \vecop(O_{n-p})
    \end{bmatrix} = J_{h}^{\Sigma_{1}} \vecop(\D\! \tilde{q}\tr) = J_{h}^{\Sigma_{1}} \, \Pi_{n,n} \, \vecop(\D\! \tilde{q}),
\end{equation*}
with
\begin{equation*}
    J_{h}^{\Sigma_{1}} = \begin{bmatrix}
        \Sigma_{2}\tr \otimes \big[ I_{p} \ \ O  \big] \\[4pt]
        \Sigma_{2}\tr \otimes \big[ O \ \ I_{n-p} \big] \\[4pt]
        -\Pi_{(n-p),p} \, \big( \Sigma_{2}\tr \otimes \big[ O \ \ I_{n-p} \big] \big) \\[4pt]
        O_{(n-p)^2 \times n^2}
    \end{bmatrix}.
\end{equation*}
Observe that
\begin{equation*}
\vecop\!\big(\widetilde{Q}\big) = \vecop\!\big(\big[\Sigma_{1} \ \ U_{\perp}\big]\big) = \begin{bmatrix}
\vecop(\Sigma_{1}) \\[4pt]
\vecop(U_{\perp})
\end{bmatrix},
\end{equation*}
hence
\begin{equation}\label{eq:vec_Lq_tilde}
    \vecop( \D\! \tilde{q}(\Sigma_{1})[E] ) = \begin{bmatrix}
        \vecop(\D\! \Sigma_{1}) \\[4pt]
        \vecop(\D\! U_{\perp})
    \end{bmatrix} = \begin{bmatrix}
        I_{np} \\[4pt]
        J_{U_{\perp}}^{\Sigma_{1}}
    \end{bmatrix} \vecop(E) = J_{\tilde{q}}^{\Sigma_{1}} \vecop(E),
\end{equation}
where the Jacobian of $U_{\perp}$  with respect to $\Sigma_{1}$ can be derived from \protect{\cite{Vaccaro:1994}} as:
\begin{equation*}
J_{U_{\perp}}^{\Sigma_{1}} = -\Big( U_{\perp}\tr \otimes \big( U_{p} S_{p}^{-1} V_{p}\tr \big) \Big) \, \Pi_{n,p}.
\end{equation*} 
Eventually, the vectorization of $\D\! g(A)[E]$ is
\begin{equation}\label{eq:vec_Lq}
    \vecop(\D\! g(A)[E]) = J_{\exp(A)}^{A} \underbrace{T J_{h}^{\Sigma_{1}} \, \Pi_{n,n} \, J_{\tilde{q}}^{\Sigma_{1}}}_{\eqqcolon J_{A}^{\Sigma_{1}}} \, \vecop(E),
\end{equation}
from which we identify the Jacobian of the exponential with respect to $\Sigma_{1}$, namely,
\begin{equation*}
J_{\exp(A)}^{\Sigma_{1}} = J^{A}_{\exp(A)} J_{A}^{\Sigma_{1}}.
\end{equation*}
Substituting \eqref{eq:vec_Lq_tilde} and \eqref{eq:vec_Lq} into \eqref{eq:vec_LZ_1} and dropping $\vecop(E)$, we obtain the Jacobian of the geodesic with respect to $\Sigma_{1}$
\begin{equation*}
    J_{Z_1}^{\Sigma_{1}} =  \left( \big[ I_{p} \ \ O  \big] \exp(A)\tr  \otimes I_n \right) J_{\tilde{q}}^{\Sigma_{1}} + \left(  \big[ I_{p} \ \ O  \big] \otimes \widetilde{Q} \right) J_{\exp(A)}^{\Sigma_{1}}.
\end{equation*}
By using the same procedure, one can get the Jacobian of the derivative of the geodesic with respect to $ \Sigma_{1} $, i.e.,
\begin{equation*}
\begin{split}
    J_{Z_2}^{\Sigma_{1}}  = & \left( \big[ I_{p} \ \ O  \big] A\tr \exp(A)\tr  \otimes I_n \right) J_{\tilde{q}}^{\Sigma_{1}} +
\left( \big[ I_{p} \ \ O  \big] A\tr \otimes \widetilde{Q} \right) J_{\exp(A)}^{\Sigma_{1}} \\
& + \left(  \big[ I_{p} \ \ O  \big] \otimes \widetilde{Q}\exp(A) \right) J_{A}^{\Sigma_{1}}.
\end{split}
\end{equation*}

\subsection{Jacobians with respect to the tangent vector}

To obtain the Jacobians with respect to the tangent vector $ \Sigma_{2} $, one can proceed similarly to the previous section. The Jacobian of the geodesic with respect to $ \Sigma_{2} $ is given by
\begin{equation*}
    J_{Z_1}^{\Sigma_{2}} = \left( \big[ I_{p} \ \ O  \big] \otimes \widetilde{Q} \right) J_{\exp(A)}^{\Sigma_{2}},
\end{equation*}
and the Jacobian of the derivative of the geodesic with respect to $ \Sigma_{2} $ is
\begin{equation*}
    J_{Z_2}^{\Sigma_{2}} = \left( \big[ I_{p} \ \ O  \big] \otimes \widetilde{Q} \right) \left[ \big( A\tr \otimes I_n \big)  J_{\exp(A)}^{\Sigma_{2}} + \big( I_n \otimes \exp(A) \big)  J_{A}^{\Sigma_{2}} \right].
\end{equation*}
Here,
\begin{equation*}
    J_{\exp(A)}^{\Sigma_{2}} = J_{\exp(A)}^{A} J_{A}^{\Sigma_{2}} \quad \text{and} \quad  J_{A}^{\Sigma_{2}} = T J_{h}^{\Sigma_{2}},
\end{equation*}
with
\begin{equation*}
J_{h}^{\Sigma_{2}} =
\begin{bmatrix}
I_p \otimes \left( \big[ I_{p} \ \ O  \big] \widetilde{Q}\tr \right) \\[4pt]
I_p \otimes \left( \big[ O \ \ I_{n-p} \big] \widetilde{Q}\tr \right) \\[4pt]
               -\Pi_{(n-p),p} \Big( I_p \otimes \big[ O \ \ I_{n-p} \big] \widetilde{Q}\tr \Big) \\[4pt]
               O_{(n-p)^2 \times np}
\end{bmatrix} \in \R^{n^2 \times np}.
\end{equation*}

\section{Condensing} \label{app:condensing_MS}

The linear system \eqref{eq:MS_linear_system} can be solved efficiently thanks to the structure of $J_{F}^{\Sigma}$, which allows any $\delta\Sigma^{(k)}$, $k=2,\ldots,m$, to be expressed as a function of $\delta\Sigma^{(1)}$ \protect{\cite[\S 7.3.5]{Stoer:1991}}.
Eventually, only one linear system of size $2np\times 2np$ has to be solved to find $\delta\Sigma^{(1)}$
\begin{equation*}
   M \cdot \delta\Sigma^{(1)} = - w,
\end{equation*}
where
\begin{equation*}
   M = C + D\cdot\!\!\! \prod_{k=m-1}^{1}\!\! G^{(k)}, \qquad w = F^{(m)} + D\cdot \sum_{k=1}^{m-1} \! \left( \prod_{\ell=k+1}^{m-1} G^{(\ell)} \right) \! \cdot F_{k}.
\end{equation*}
The other $\delta\Sigma^{(k)}$ are obtained as
\begin{equation*}
   \delta\Sigma^{(k)} = F^{(k-1)} + G^{(k-1)} \cdot \delta\Sigma^{(k-1)}, \quad k=2,\ldots, m.
\end{equation*}
The complexity of multiple shooting with this condensing strategy is $O(mn^3p^3)$.

\section*{Data availability}

The code and datasets generated and analyzed during the current study are available in the LFMS repository, \url{https://github.com/MarcoSutti/LFMS_Stiefel}.

%
\section*{Conflict of interest}

The author declares that he has no conflict of interest.

\bibliographystyle{alpha_init}

\bibliography{Shooting_Stiefel.bib}

\end{document}